\newcommand*\circled[1]{
	\tikz[baseline=(char.base)]{
	\node[anchor=text,shape=circle,draw,inner sep=-1pt](char){$#1$\strut};}}
\newcommand{\A}{\mathbf{A}}
\newcommand{\B}{\mathbf{B}}
\newcommand{\abs}[1]{\left\vert#1\right\vert}
\newcommand{\ds}{\displaystyle}
\newcommand{\eq}{\mathsf{e}}
\newcommand{\f}{\mathbf{f}}
\newcommand{\p}{\mathsf{p}}
\newcommand{\q}{\mathsf{q}}
\newcommand{\rhomax}{\rho_\text{max}}
\newcommand{\Tmax}{T_\text{max}}
\newcommand{\unit}[1]{\textup{#1}}
\newcommand{\vmax}{V_\text{max}}
\theoremstyle{plain}\newtheorem{theorem}{Theorem}[section]
\theoremstyle{remark}\newtheorem{remark}[theorem]{Remark}
\newenvironment{acknow}[1][Acknowledgement]{\textbf{#1.} }{
}
\title{Fundamental diagrams in traffic flow: the case of heterogeneous kinetic models}
\author{Gabriella Puppo \\
		{\small\it Dipartimento di Scienza e Alta Tecnologia} \\[-1mm]
		{\small\it Universit\`a  dell'Insubria} \\[-1mm]
		{\small\it Via Valleggio 11, 22100 Como, Italy} \\[5mm]
		Matteo Semplice \\
		{\small\it Dipartimento di Matematica ``G. Peano''} \\[-1mm]
		{\small\it Universit\`a di Torino} \\[-1mm]
		{\small\it Via Carlo Alberto 10, 10123 Torino, Italy} \\[5mm]	
		Andrea Tosin \\
		{\small\it Istituto per le Applicazioni del Calcolo ``M. Picone''} \\[-1mm]
		{\small\it Consiglio Nazionale delle Ricerche} \\[-1mm]
		{\small\it Via dei Taurini 19, 00185 Rome, Italy} \\[5mm]
		Giuseppe Visconti \\
		{\small\it Dipartimento di Scienza e Alta Tecnologia} \\[-1mm]
		{\small\it Universit\`a dell'Insubria} \\[-1mm]
		{\small\it Via Valleggio 11, 22100 Como, Italy}
	   }
\date{}
\begin{document}

\maketitle

\begin{abstract}
Experimental studies on vehicular traffic provide data on quantities like density, flux, and mean speed of the vehicles. However, the diagrams relating these variables (the \emph{fundamental} and \emph{speed} diagrams) show some peculiarities not yet fully reproduced nor explained by mathematical models. In this paper, resting on the methods of kinetic theory, we introduce a new traffic model which takes into account the heterogeneous nature of the flow of vehicles along a road. In more detail, the model considers traffic as a mixture of two or more populations of vehicles (e.g., cars and trucks) with different microscopic characteristics, in particular different lengths and/or maximum speeds. With this approach we gain some insights into the scattering of the data in the regime of congested traffic clearly shown by actual measurements.

\medskip

\noindent{\bf Keywords:} traffic flow, kinetic models, multispecies kinetic equations, fundamental diagrams

\medskip

\noindent{\bf Mathematics Subject Classification:} 76P05, 65Z05, 90B20
\end{abstract}

\section{Introduction}
Prediction and control of traffic have become an important aspect in the modern world. In fact, the necessity to forecast the depletion time of a queue or to optimize traffic flows, thereby reducing the number of accidents, has arisen following the increase of circulating vehicles.

In the current mathematical literature, three different approaches are mainly used to model traffic flow phenomena. \emph{Microscopic} models look at vehicles as single entities of traffic and predict, using a system of ordinary differential equations, the evolution of their position and speed (namely, the microscopic states characterizing their dynamics) regarded as time dependent variables. In these models, the acceleration is prescribed for each vehicle as a function of time, position, and speed of the various entities of the system, taking also into account mutual interactions among vehicles. For example, in the well known \emph{follow-the-leader} theory each vehicle is assumed to adapt its speed to the one of the leading vehicle based on their instantaneous relative speed and mutual distance, see~\cite{BrackstoneMcDonald,gazis1961OR,Pipes,ZhangMultiphase}. On the opposite end, \emph{macroscopic} models provide a large-scale aggregate point of view in which the focus is not on each single particle of the system. In this case, the motion of the vehicles along a road is described by means of partial differential equations inspired by conservation and balance laws from fluid dynamics, following the seminal works~\cite{lighthill1955PRSL,richards1956OR}. Improvements and further evolutions of such a basic macroscopic description of traffic have been proposed over the years by several authors, from the classical mechanically consistent restatement of second order models~\cite{aw2000SIAP} to applications to road networks thoroughly developed in the book~\cite{garavello2006BOOK}. More refined macroscopic models provide flux-density relations which depend on different states of the flow, e.g. see~\cite{Lebacque03}. In the middle, \emph{mesoscopic} (or \emph{kinetic}) models are based on a statistical mechanics approach, which still provides an aggregate representation of the traffic flow while linking macroscopic dynamics to pairwise interactions among vehicles at a smaller microscopic scale. These models will be the main reference background of the present paper. 

Kinetic (mesoscopic) models, first introduced in~\cite{paveri1975TR,prigogine1961PROC,prigogine1971BOOK}, are based on the Boltzmann equation that describes the statistical behavior of a system of particles. From the kinetic point of view, the system is again seen as the resultant of the evolution of microscopic particles, with given microscopic position and speed, but its representation is provided in aggregate terms by a statistical distribution function, whose evolution is described by integro-differential equations. Compared to microscopic models, the kinetic approach requires a smaller number of equations and parameters. On the other hand, unlike macroscopic models, at the mesoscopic scale the evolution equations do not require an a priori closure law: the flow is provided by the statistical moments of the kinetic distribution function over the microscopic states. Moreover, kinetic models are a quite natural way to bridge microscopic causes and macroscopic effects. They have also been extended to include multilane traffic flow~\cite{klar1999SIAP-1,klar1999SIAP-2}, flows on networks~\cite{fermotosin2015} and control problems~\cite{herty2007M2AS}, to name but just a few applications. Also, kinetic models have been proposed to derive macroscopic equations, see~\cite{HertyIllner08,HertyIllner12}.

For an overview of vehicular traffic models at all scales, the interested reader is referred e.g., to the review papers~\cite{klar2004BOOKCH,piccoli2009ENCYCLOPEDIA} and references therein.

In this paper we propose a multipopulation kinetic model for traffic flow, which draws inspiration from the ideas presented in~\cite{benzoni2003EJAM} for macroscopic models, recast in the frame of discrete-velocity kinetic models~\cite{delitala2007M3AS,fermo2013SIAP}. The main goal of this paper is to study fundamental diagrams, computed from moments of equilibrium solutions of the kinetic equations.  In particular, considering traffic flow as a mixture of populations with different microscopic characteristics helps to explain the experimentally observed scattering of fundamental diagrams in the phase of congested traffic. With this approach, scattered data in the congested phase are naturally {\em predicted} by the model, by taking into account the  macroscopic variability of the flux and mean speed at equilibrium due to the heterogeneous composition of the ``mixture''. This conclusion is reached without invoking further elements of microscopic randomness of the system: for example in~\cite{fermo2014DCDSS}, which inspired the present work, the explanation for the phase transition appeals to the stochasticity of the drivers' behavior and to the consequent variability of the {\em microscopic} speeds at equilibrium. Moreover, the models proposed here and in~\cite{delitala2007M3AS,fermo2014DCDSS} predict a sharp phase transition between the free and the congested phases of traffic, with a sharp capacity drop across the phase transition. We wish to stress that we do not propose a model that interpolates experimental data. Rather, we use experimental data to validate the model we propose.

In literature, a variety of multiphase models have been introduced in order to reflect the features of traffic, for a review see~\cite{klarReview} and references therein. The heterogeneity of traffic flow composition is often described by considering two or more classes of drivers with different behavioural attributes, see~\cite{LebacqueGSOM,MendezVelasco13}; here the heterogeneity will be described also by introducing two or more classes of vehicles with different physical features, as in~\cite{benzoni2003EJAM} for macroscopic models.

In more detail, the structure of the paper is as follows: in Section~\ref{sec:fund_diag} we briefly review the role of fundamental diagrams in vehicular traffic practice. Next, in Section~\ref{sec:1pop} we describe the discrete-velocity kinetic model developed in~\cite{delitala2007M3AS,fermo2013SIAP} by focusing on its spatially homogeneous version, which represents the mathematical counterpart of the experimental setting in which traffic equilibria and fundamental diagrams are measured. In Section~\ref{sec:2pop} we first review the multi-population macroscopic model~\cite{benzoni2003EJAM} and then introduce our new two-population kinetic model, proving in particular its consistency with the original single-population model and describing how to compute equilibrium solutions. Then in Section~\ref{sec:fundamental} we present and analyze the resulting fundamental diagrams, and we end in Section~\ref{sec:conclusions} with comments and perspectives.

\section{Fundamental diagrams}
\label{sec:fund_diag}
In this section we present a brief description of some basic tools for the analysis of traffic problems, namely the diagrams which relate the macroscopic flux and mean speed to the vehicle density in homogeneous steady conditions. The qualitative structure of such diagrams is defined by the properties of different regimes, or \emph{phases}, of traffic as outlined in the following.

\begin{figure}[!t]
\centering
\includegraphics[width=0.45\textwidth,height=0.2\textheight]{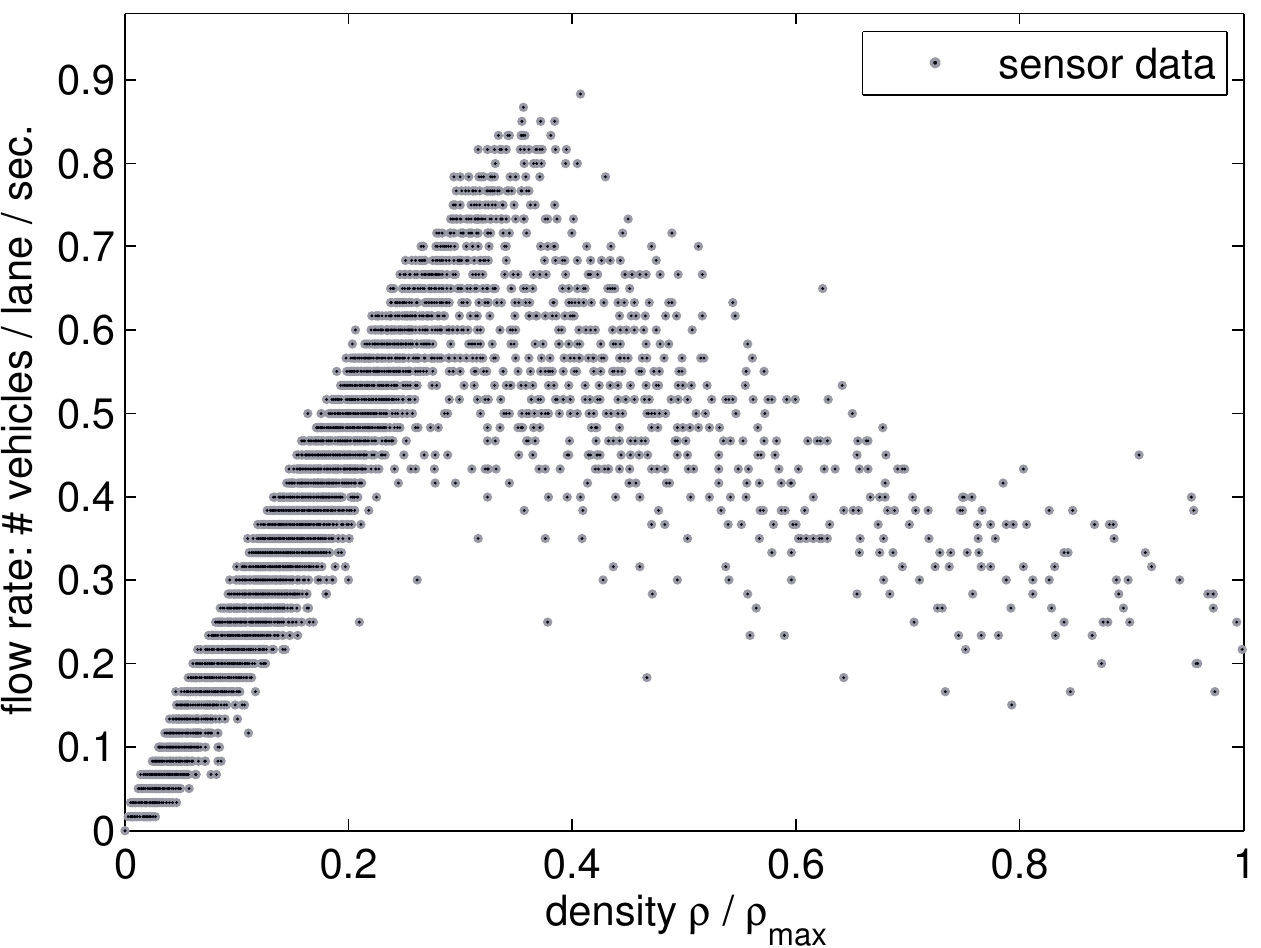}
\includegraphics[width=0.45\textwidth,height=0.2\textheight]{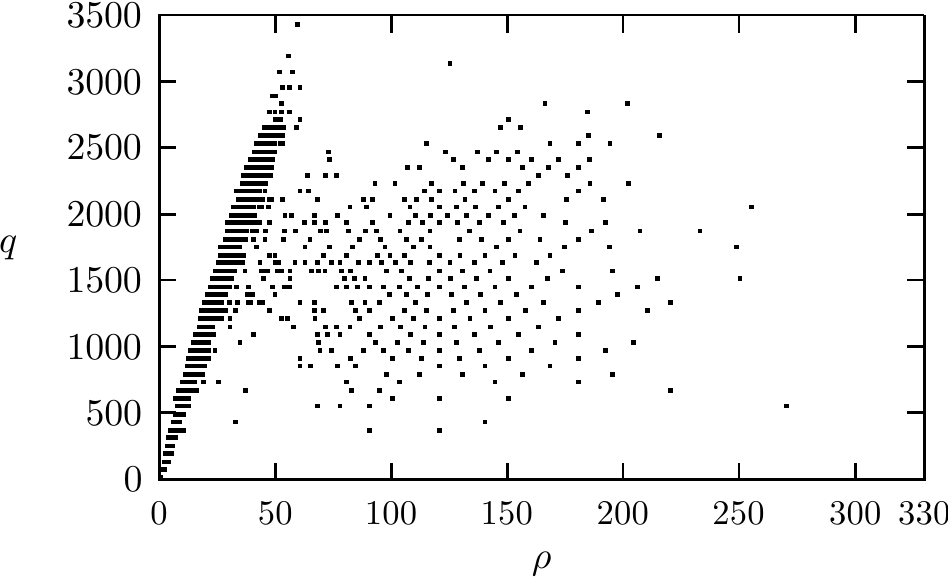}
\caption{Fundamental diagrams obtained from experimental data. Left: measurements provided by the Minnesota Department of Transportation in 2003, reproduced by kind permission from Seibold et al.~\cite{seibold2013NHM}. Right: experimental data collected in one week in Viale del Muro Torto, Rome, Italy, from~\cite{piccoli2009ENCYCLOPEDIA}.}
\label{fig:exp_diag}
\end{figure}

\begin{description}
\item[Flux-density diagrams] Also called \emph{fundamental diagrams}, they report the flow rate of vehicles as a function of the traffic density $\rho$, which can be defined as the number of vehicles  per kilometer (Fig.~\ref{fig:exp_diag}, right), or as a function of the normalized density (Fig.~\ref{fig:exp_diag}, left). At low traffic densities, the so-called \emph{free phase} in which interactions among vehicles are rare, the flux grows nearly linearly with the density until a \emph{critical density} value is reached, at which the flux takes its maximum value (\emph{road capacity}). Beyond such a critical value traffic switches to the \emph{congested phase}, which in~\cite{kerner2004BOOK} is defined as complementary to the free phase. The two phases may be separated by a \emph{capacity drop}~\cite{ZhangMultiphase}, across which the flux drops suddenly from its maximum value at free flow to a lower maximum in the congested phase. In this regime the flux decreases as the density increases. In fact interactions among vehicles are more and more frequent due to the higher packing, which causes faster vehicles to be hampered by slower ones. The formation of local slowdowns (\emph{phantom traffic jams}) is first observed. Additional increments of the density cause a steep reduction of the flux until the so-called \emph{traffic jam} is reached, in which the density reaches its maximum value $\rho_{\max}$, called \emph{jam density}, and the flux is zero.
\item[Speed-density diagrams] They give the mean speed of the vehicles as a function of the local macroscopic density of traffic. In free flow conditions, vehicles travel at the maximum allowed speed, which depends on the environmental conditions (such as e.g., quality of the road, weather conditions, infrastructure), on the mechanical characteristics of the vehicles, and on the imposed speed limits. This speed, called the \emph{free flow speed}, can be reached when there is a large distance among vehicles on the road. Conversely, in congested flow conditions vehicles travel closer to one another at a reduced speed, until the density reaches the jam density, at which vehicles stop and have zero speed.
\end{description}

These diagrams play an important role in the prediction of the capacity of a road and in the control of the flow of vehicles.

Examples of fundamental diagrams provided by experimental measurements are shown in Fig.~\ref{fig:exp_diag}. They clearly exhibit the phase transition between free and congested flow: below the critical density the flux values distribute approximately on a line with positive slope, thus the flux can be regarded as a single-valued increasing function of the density with low, though non zero, dispersion; conversely, above the critical density the flux decreases and experimental data exhibit a large scattering in the flux-density plane. In the congested phase, therefore, the flux can hardly be approximated by a single-valued function of the density, see~\cite{GreenshieldsSymposium}. Moreover, in the plot on the right a capacity drop can also be seen.

Kinetic models of traffic flow give fundamental diagrams as stationary asymptotic solutions starting from a statistical description of microscopic interactions among vehicles. In addition, some kinetic models have proved to be able to catch the transition from the free to the congested phase of traffic without building the phase transition into the model, see e.g.,~\cite{delitala2007M3AS,fermo2013SIAP}.

However, standard kinetic models do not account for the scattered data typical of the congested regime. For instance, in~\cite{HertyIllner12} multivalued fundamental diagrams are obtained supposing the flow aims to stabilize around a multivalued heuristic equilibrium velocity which is not computed by the model itself. Otherwise, this characteristic of the flow is explained considering the statistical variability of driver behaviors, who may individually decide to drive at a different speed than the one resulting from the local density, see e.g.,~\cite{fermo2014DCDSS}.

In this work we propose instead a different interpretation of the scattering of the flux in congested traffic, based on the consideration that the flow along a road is naturally \emph{heterogeneous}. Namely, it is composed by different classes of vehicles with different physical and kinematic characteristics (size, maximum speed, \dots). For this we will extend the aforementioned kinetic models so as to deal with a mixture of two populations of vehicles, say cars and trucks, each described by its own statistical distribution function. The core of the model will be the statistical description of the microscopic interactions among the vehicles of the same and of different populations, which will take into account the microscopic differences of the various types of vehicles. For the sake of simplicity, the model will be described for the case of a mixture of two populations, but it can be easily generalized to the case of several populations, see~\cite{benzoni2003EJAM}.

\section{A discrete kinetic model}
\label{sec:1pop}
In this section we briefly review the kinetic traffic model recently introduced in~\cite{fermo2013SIAP}, which will be the basis for our multipopulation extension. In the kinetic approach we focus on a statistical description of the microscopic states of the vehicles, therefore the evolution of their position $x$ and speed $v$ is described by means of a distribution function $f=f(t,\,x,\,v)$ such that $f(t,\,x,\,v)\,dx\,dv$ is the number of vehicles which at time $t$ are located between $x$ and $x+dx$ with a speed between $v$ and $v+dv$.

The model proposed in~\cite{fermo2013SIAP} is discrete both in space and in speed. Since in this work we focus on the space homogeneous case, we describe only the domain of the microscopic speeds, say $\mathcal{V}\subseteq [0,\,+\infty)$, that is:
\[
	\mathcal{V}=\{v_1,\,v_2,\,\dots,v_j,\,\dots,\,v_n\},
\]
where the $v_j$'s are \emph{speed classes} such that $0\leq v_j<v_{j+1} \quad \forall\,j=1,\,\dots,\,n-1$, and $v_1=0,\ v_n=\vmax$, $\vmax$ being the maximum speed of a vehicle. For instance, $\vmax$ can be chosen as a speed limit imposed by safety regulations, or by the state of the road, or by the mechanical characteristics of the vehicles.

The microscopic state of a generic vehicle is given by $v_j\in\mathcal{V}$ and  the statistical distribution of vehicles is given by the functions:
\[
	f_j=f_j(t):[0,\,\Tmax]\to [0,\,+\infty), \quad j=1,\,\dots,\,n,
\]
i.e. $f_j(t)$ is the number of vehicles which, at time $t$, travel with speed $v_j$.

The macroscopic variables useful in the study of traffic, namely the vehicle density $\rho$, flux $q$, and mean speed $u$ are obtained from the $f_j$'s as statistical moments with respect to the speed:
\begin{equation}
	\rho(t)=\sum_{j=1}^n f_j(t), \qquad q(t)=\sum_{j=1}^n v_jf_j(t), \qquad u(t)=\frac{q(t)}{\rho(t)}.
	\label{eq:var.macro}
\end{equation}

As already mentioned in Section~\ref{sec:fund_diag}, the experimental diagrams are measured under flow conditions which are as much as possible homogeneous in space and stationary. Thus we study the evolution in time of $f_j(t)$ due to vehicle interactions towards equilibrium. The corresponding system of (spatially homogeneous) Boltzmann-type kinetic equations writes:

\begin{equation}
	\frac{df_j}{dt}=J_j[\f,\,\f], \quad j=1,\,\dots,\,n,
	\label{eq:kinetic.1pop}
\end{equation}
where $J_j$ is the $j$-th collisional operator, which describes the microscopic interactions among vehicles causing the change of $v_j$ in time. We use the vector notation $\f:=\{f_j\}_{j=1}^{n}$. Conservation of mass requires that
\[
	\sum_{j=1}^{n}J_j[\f,\,\f]=0 \quad \forall\,\f,
\]
which ensures $\frac{d\rho}{dt}=0$.

Stationary flow conditions mean that we are actually interested in equilibrium solutions (if any) to system~\eqref{eq:kinetic.1pop}, that is constant-in-time solutions $\f^\eq=\{f^\eq_j\}_{j=1}^{n}$ such that $J_j[\f^\eq,\,\f^\eq]=0$  for all $j=1,\,\dots,\,n$. In~\cite{fermo2014DCDSS}, it is proven that $\forall \rho>0,\; \exists\,!\:\f^\eq$. Hence equilibrium solutions are parameterized by specific values of the vehicle density $\rho$, which is given by the initial condition $\rho=\sum_{j=1}^n f_j(0)$. This fact allows one to define analytically the fundamental and speed diagrams of traffic by means of the following mappings:
\[
	\rho\mapsto\f^\eq \quad \Rightarrow \quad \rho\mapsto q(\rho)=\sum_{j=1}^{n}v_jf^\eq_j, \qquad \rho\mapsto u(\rho)=\frac{q(\rho)}{\rho}.
\]
In particular, if for any given $\rho$, system~\eqref{eq:kinetic.1pop} admits a unique stable equilibrium then these mappings are indeed functions of $\rho$; otherwise, they define multivalued diagrams. We stress that, contrary to macroscopic models, the mapping $\rho\mapsto q(\rho)$ is not based on a priori closure relations but is obtained from the large time evolution of the kinetic distribution function, as a result of microscopic vehicle interactions.

\subsection{Modeling vehicle interactions}
\label{sec:1pop_interactions}
The operators $J_j$ model the microscopic interactions among vehicles. Following~\cite{fermo2013SIAP}, the formalization of $J_j$ is based on stochastic game theory. This point of view allows one to assign post-interaction speeds in a non-deterministic way, consistently with the intrinsic stochasticity of driver behaviors. We report here the construction of the operator $J_j$, which will be extended later to the two-population case. We consider only binary interactions among vehicles, thus the collisional operator can be written as:
\[
	J_j[\f,\,\f] = G_j[\f,\,\f]-f_jL_j[\f].
\]
Take:
\[
	G_j[\f,\,\f]:=\sum_{h,k=1}^n\eta_{hk} A_{hk}^jf_h f_k \qquad \text{and}
		\qquad L_j[\f]:=\sum_{k=1}^n\eta_{jk} f_k
\]
which are the \emph{gain} and \emph{loss} terms, respectively. The coefficients $\eta_{hk},\,\eta_{jk}>0$ are the \emph{interaction rates}, which depend on the relative speed of the interacting pairs: $\eta_{hk}=\eta(\abs{v_k-v_h})$ as in~\cite{coscia2007IJNM}. For simplicity, here we will assume the interaction rates independent of the pre-interaction speeds, so $\eta_{hk}\equiv \eta$ constant. 
The term $G_j$ counts statistically the number of interactions which lead, in the unit time, a so-called \emph{candidate} vehicle with speed $v_h$ to switch to the \emph{test} speed $v_j$ after an interaction with a \emph{field} vehicle with speed $v_k$. Conversely, the term $L_j$ describes the loss of vehicles with test speed $v_j$ after interactions with any field vehicle.
Thus the single-population model writes as:
\begin{equation}
	\frac{df_j}{dt}=\sum_{h,k=1}^n\eta A_{hk}^jf_h f_k-f_j\sum_{k=1}^n\eta f_k.
	\label{eq:model.1pop}
\end{equation}

For each $j=1,\dots,n$, the matrix $\A^j=\{A_{hk}^j\}_{h,k=1}^n$ is called the \emph{table of games}. It encodes the discrete probability distribution of gaining the test speed $v_j$:
\[
	A_{hk}^j=\operatorname{Prob}(v_h\rightarrow v_j\vert v_k,\,\rho), \qquad h,\,k,\,j=1,\,\dots,\,n,
\]
which in the present model is further parameterized by the macroscopic density $\rho$ so as to account for the influence of the macroscopic traffic conditions (local road congestion) on the microscopic interactions among vehicles. We stress that this is a further source of nonlinearity on the right-hand side of~\eqref{eq:kinetic.1pop}, besides the quadratic one typical of Boltzmann-like kinetic equations. Since for each fixed $j$ the coefficients $A_{hk}^j$ constitute a discrete probability distribution, they must satisfy the following conditions:
\begin{equation}
	\left.
	\begin{array}{r}
		0\leq A_{hk}^j\leq 1 \\
		\displaystyle{\sum_{j=1}^n}A_{hk}^j=1
	\end{array}
	\right\}
	\quad \forall\ h,\,k,\,j=1,\,\dots,\,n,
	\quad \forall\,\rho\in[0,\,\rhomax],
	\label{eq:prop_A}
\end{equation}
$\rhomax>0$ being the maximum density of vehicles that can be locally accomodated on the road in bumper-to-bumper conditions. These conditions ensure mass conservation.

The table of games of model~\eqref{eq:model.1pop} is built appealing to the following assumptions:
\begin{itemize}
\item a candidate vehicle with speed $v_h$ can accelerate by at most one speed class at a time. However, it can decelerate by an arbitrary number of speed classes when it interacts with a field vehicle with lower speed $v_k<v_h$;
\item let $P$ be the probability that a candidate vehicle gets the maximum possible test speed resulting from an interaction. We assume that $P$ is a decreasing function of the density $\rho$.
\end{itemize}

In more detail, we distinguish three types of interactions which determine completely the table of games. 
\begin{itemize}
\item \emph{Interaction with a faster field vehicle:} in this case we have $v_h<v_k$, or $h<k$. Following the interaction, we assume that the candidate vehicle can either accelerate or maintain its speed, thus:
\begin{equation}
	A_{hk}^j=
		\begin{cases}
			1-P & \text{if\ } j=h \\
			P & \text{if\ } j=h+1 \\
			0 & \text{otherwise}
		\end{cases}
		\qquad h<k=2,\,\dots,\,n.
	\label{h<k:1pop}
\end{equation}

\item \emph{Interaction with a slower field vehicle:} in this case we have $v_h>v_k$, or $h>k$. Following the interaction, we assume that the candidate vehicle can either maintain its speed, if for instance there is enough room to overtake the leading field vehicle, or decelerate to $v_k$ and queue up, thus:
\begin{equation}
	A_{hk}^j=
		\begin{cases}
			1-P & \text{if\ } j=k \\
			P & \text{if\ } j=h \\
			0 & \text{otherwise}
		\end{cases}
	\qquad \quad h>k=1,\,\dots,\,n-1.
	\label{h>k:1pop}
\end{equation}
Notice that in this case $P$ plays the role of a \emph{probability of passing} as defined in~\cite{prigogine1961PROC}.

\item \emph{Interaction with a field vehicle with the same speed:} in this case we have $v_h=v_k$, or $h=k$. Following the interaction, we assume that the candidate vehicle can either maintain its pre-interaction speed, or accelerate to overtake the leading vehicle, or decelerate. Hence the test speed resulting from this interaction is either $v_j=v_{h+1}$ with probability $P$, or $v_j=v_{h-1}$ with probability, say $Q$ or finally $v_j=v_h$ with probability $1-(P+Q)$. Thus $Q$ is the \emph{probability of braking} and it is chosen as an increasing function of $\rho$.

We further distinguish three cases, in fact if the candidate vehicle is either in $v_1=0$ or in $v_n=\vmax$ then it cannot decelerate or accelerate, respectively. Thus:
\begin{subequations}
\begin{equation}
	A_{11}^j=
		\begin{cases}
			1-P & \text{if\ } j=1 \\
			P & \text{if\ } j=2 \\
			0 & \text{otherwise}
		\end{cases}
	\label{1h=k:1pop}
\end{equation}

\begin{equation}
	A_{hh}^j=
		\begin{cases}
			Q & \text{if\ } j=h-1 \\
			1-(P+Q) & \text{if\ } j=h \\
			P & \text{if\ } j=h+1 \\
			0 & \text{otherwise}
		\end{cases}
	\qquad h=2,\,\dots,\,n-1
	\label{2h=k:1pop}
\end{equation}

\begin{equation}
	A_{nn}^j=
		\begin{cases}
			Q & \text{if\ } j=n-1 \\
			1-Q & \text{if\ } j=n \\
			0 & \text{otherwise}
		\end{cases}
	\label{3h=k:1pop}
\end{equation}
\end{subequations}
\end{itemize}

Note that with these choices the candidate vehicle can accelerate at most by one speed class, which amounts to bounding the maximum acceleration, see~\cite{Lebacque03}. In contrast, the deceleration is not bounded and this reflects the hypothesis that drivers behave differently in acceleration and deceleration, see the definition of \emph{traffic hysteresis} in~\cite{ZhangMultiphase} and references therein. Moreover, in the third case we use two different probabilities for the acceleration and deceleration interactions, as proposed also in~\cite{HertyIllner08}.

The choice of $P$ is crucial in our model. As in most traffic models, we will assume that accelerating is less likely in high density traffic, so $P$ is chosen as a decreasing function of $\rho$. Conversely, the probability of braking, which is either $Q$ or $1-P$, will be an increasing function of $\rho$.
Following the standard Greenshield's assumption, which is the simplest choice, we will choose, unless otherwise stated,\begin{equation}
	P=\alpha\left(1-\frac{\rho}{\rhomax}\right), \quad Q=\left(1-\alpha\right)\frac{\rho}{\rhomax}, \quad 0\leq \alpha\leq 1,
	\label{eq:probabilities}
\end{equation}
where the coefficient $\alpha\in[0,\,1]$ can be thought of as a parameter describing the environmental conditions, for instance road or weather conditions, with $\alpha=0,\,1$ standing for prohibitive and optimal conditions, respectively.

The ansatz~\eqref{eq:probabilities} is chosen by several other authors, see~\cite{HertyIllner08,klar1996,prigogine1961PROC}, but other choices are possible, see below. Further, $P$ and $Q$ can also depend on the local states, i.e. $P=P(\rho,v_h,v_k)$. This would made the model richer, but our results show that the simple choice $P=P(\rho)$ already accounts for the complexity of macroscopic data. Thus, unless otherwise stated, in the following we will take $P$ defined by~\eqref{eq:probabilities} and $\alpha=1$.

\begin{figure}[!t]
\centering
\includegraphics[width=\textwidth]{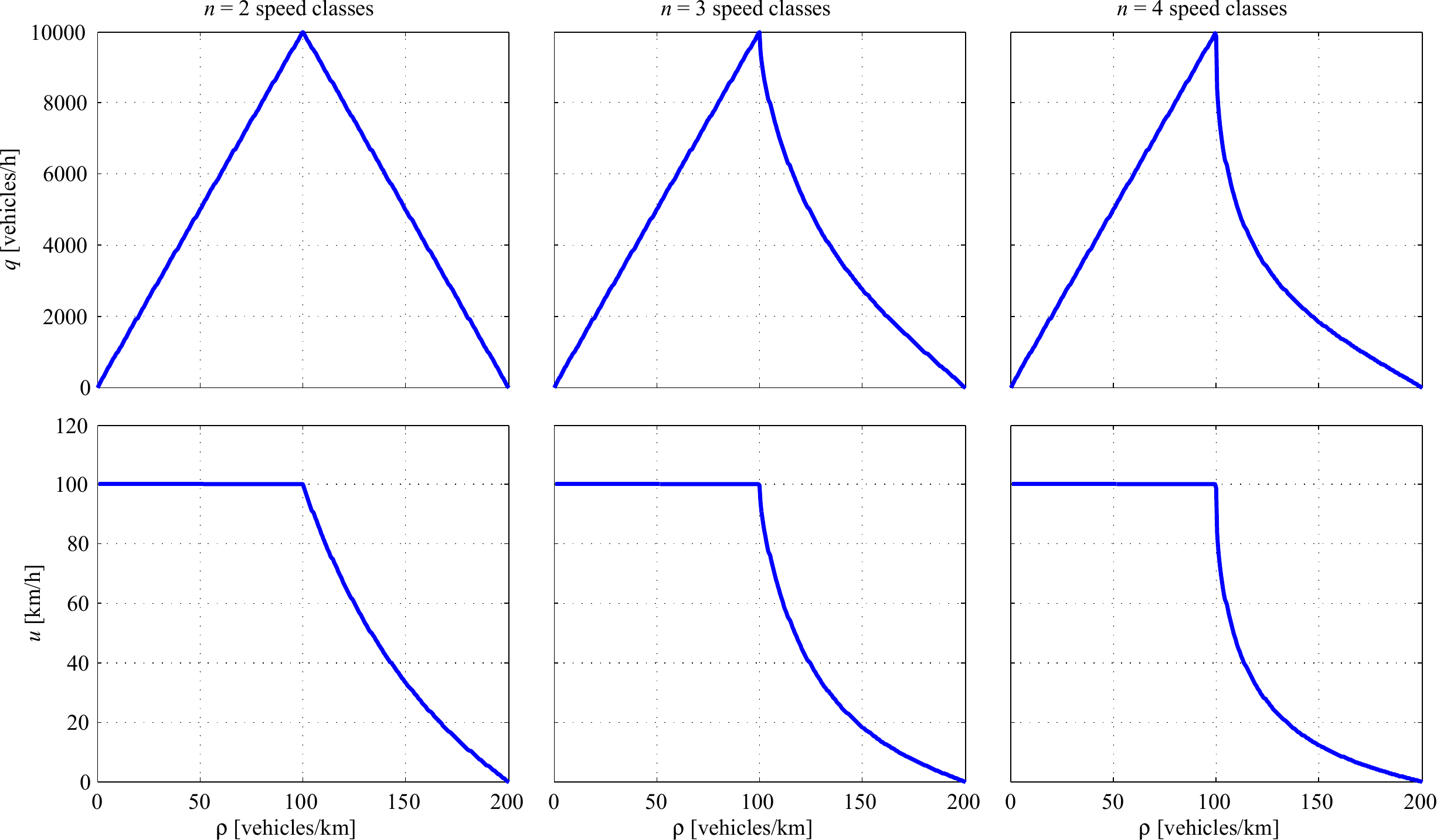}
\caption{Top row: fundamental diagrams, bottom row: speed diagrams obtained from model~\eqref{eq:model.1pop} with $n=2,\,3,\,4$ speed classes, maximum density $\rhomax=200~\unit{vehicles/km}$, and uniformly distributed microscopic speeds in the interval $[0,\,100~\unit{km/h}]$.}
\label{fig:tosin}
\end{figure}

By computing the evolution towards equilibrium of a given initial condition corresponding to a fixed value of $\rho$ we obtain the fundamental and speed diagrams depicted in Fig.~\ref{fig:tosin} for three different values of the number $n$ of speed classes. For $\rho<\frac{1}{2}\rhomax$ we recognize the free phase of traffic, in which the flux is an increasing linear function of the density. Conversely, for $\rho>\frac{1}{2}\rhomax$ we find the congested phase, in which the specific form of the diagrams predicted by the model depends on the number $n$ of speed classes. Note that for $n=3$ and $n=4$ the phenomenon of capacity drop becomes apparent. 

The value of $\rho_c=\frac{1}{2}\rhomax$ is due to the particular choice of $P$. If we take instead
\begin{equation}\label{eq:gamma_law}
	P=\alpha\left(1-\left(\frac{\rho}{\rhomax}\right)^{\gamma}\right), \quad \gamma>0
\end{equation}
then the critical density $\rho_c$ decreases when $\gamma<1$, see Figure~\ref{fig:square} in Section~\ref{sec:fundamental}, and the discussion in \S \ref{sec:equilibria}. The value $\rho_c$ mentioned in Section~\ref{sec:fund_diag}, at which a bifurcation of equilibria occurs, is the mathematical counterpart of the physical phase transition. See also~\cite{fermo2014DCDSS}.

These results confirm that the kinetic approach is able to catch successfully the phase transition in traffic flow as a consequence of more elementary microscopic interaction rules. In particular, such a phase transition need not be postulated a priori through heuristic closures of the flux as a given function of the density. Nevertheless, model~\eqref{eq:model.1pop} still provides a single-valued density-flux relationship. In fact, as shown in~\cite{fermo2014DCDSS}, for all $\rho\in[0,\,\rhomax]$ there exists a unique stable and attractive equilibrium $\f^\eq=\{f_j^\eq\}_{j=1}^n$. Consequently, the flux $q$ at equilibrium is uniquely determined by the initial density $\rho$, which does not explain the scattered data of the experimental diagrams.

\section{Two-population models}
\label{sec:2pop}
Starting from the kinetic approach discussed in the previous section, we now introduce a model which treats traffic as a mixture of different types of vehicles with different physical and kinematic characteristics. As far as we know, this is the first attempt to account for the heterogeneity of traffic in a kinetic model. We will see that the proposed structure allows one to account for the nature of scattered data in experimental diagrams. For the sake of simplicity we will consider a two-population model, which can be easily extended to more complex mixtures.

Multi-population models of vehicular traffic are already available in the literature, e.g. see~\cite{benzoni2003EJAM,LebacqueGSOM,MendezVelasco13}. Here we start from~\cite{benzoni2003EJAM}, in which the authors describe an $M$-population generalization of the Lighthill-Whitham-Richards macroscopic traffic models~\cite{lighthill1955PRSL,richards1956OR}, that we briefly illustrate in the case of $M=2$ species as an introduction to the forthcoming kinetic approach.

Let $N_\p(t,x)$ be the number of vehicles of the $\p$-th population, $\p=1,2$, contained in a stretch of road of length $L$ (typically $L$ will be 1 kilometer). The model consists of two coupled one-dimensional conservation laws:
\begin{equation}
	\begin{cases}
		\partial_t\rho_1+\partial_x F_1(\rho_1,\,\rho_2)=0 \\
		\partial_t\rho_2+\partial_x F_2(\rho_1,\,\rho_2)=0
	\end{cases}
\label{mod:colombo}
\end{equation}
where $\rho_\p=\rho_\p(t,\,x)=N_\p(t,x)/L$ is the macroscopic density of the $\p$-th species, $F_\p(\rho_1,\,\rho_2)=\rho_\p v_\p(\rho_1,\,\rho_2)$ its flux function, and $v_\p(\rho_1,\,\rho_2)$ is the speed-density relation, which describes the attitude of drivers of the $\p$-th population to change speed on the basis of the local values of $\rho_1$, $\rho_2$. The model is based on the idea that the $\p$-th population is characterised by vehicles with length $l_\p>0$ and maximum velocity $V_\p>0$. Then, one can define the \emph{fraction of road occupancy} as the dimensionless quantity
\[ 
	s:=\rho_1l_1+\rho_2l_2, \qquad 0\leq s \leq 1,
\]
and consider the following extension of the Greenshield speed-density relation:
\[
	v_\p(\rho_1,\,\rho_2)=\left(1-s\right)V_\p, \quad \p=1,\,2.
\]
With these choices, system~\eqref{mod:colombo} becomes\begin{equation}
	\begin{cases}
		\partial_t\rho_1+\partial_x(\rho_1(1-s)V_1)=0 \\
		\partial_t\rho_2+\partial_x(\rho_2(1-s)V_2)=0
	\end{cases}
\label{mod:colombo2}
\end{equation}
with fluxes $F_\p(\rho_1,\,\rho_2)=\rho_\p(1-s)V_p$. We notice that the total flux $F_1+F_2=(1-s)(\rho_1V_1+\rho_2V_2)$ is not a one-to-one function of the fraction of road occupancy $s$, as there might exist different pairs $(\rho_1,\,\rho_2)$ giving rise to the same value of $s$ and nevertheless to different total fluxes. This is possible provided $l_1\ne l_2$ or $V_1\ne V_2$.

\subsection{A two-population kinetic model}
\label{sec:2pop.kin}
In constructing our two-population kinetic model we confine ourselves to the spatially homogeneous case, in order to focus on the study of fundamental diagrams. To fix ideas, we identify the two classes of vehicles with ``cars'' ($C$) and ``trucks'' ($T$), respectively. Roughly speaking, the physical and kinematic differences between them consist in that cars are shorter and faster than trucks, therefore $l^C\leq l^T$ and $V^C\geq V^T$. Clearly, other choices are possible, see Section~\ref{sec:fundamental}. We adopt a compact notation, which makes use of two indices:
\[
	\p\in\{C,\,T\}, \qquad \q = \neg\, \p
\]
to label various quantities referred to either population of vehicles.

We assume that the discrete spaces of microscopic speeds for cars and trucks, $\mathcal{V}^C$, $\mathcal{V}^T$, respectively, are such that $\mathcal{V}^T\subseteq\mathcal{V}^C$, i.e. the speeds accessible to trucks are a subset of those accessible to cars.  For simplicity, we take $\mathcal{V}^C$ as an equispaced lattice of speeds, i.e.
\[
	\mathcal{V}^C =\left\{ v_j=\frac{j-1}{n^C-1}\vmax, \quad 1\leq j\leq n^C \right\},
\]
where $n^C$ is the number of speed classes for cars, then let $\mathcal{V}^T=\{v_j\}_{j=1}^{n^T}$ with $n^T\leq n^C$. This way the maximum speed of cars is $V_C=\vmax$, whereas the maximum speed of trucks is $V_T=\frac{n^T-1}{n^C-1}\vmax\leq \vmax$.

On the discrete space $\mathcal{V}^\p$ we introduce the kinetic distribution function
\[
	f^\p_j=f^\p_j(t):[0,\,\Tmax]\to [0,\,+\infty), \quad \p\in\{C,\,T\}, \quad j=1,\dots, n^{\p},
\]
which gives the statistical distribution of $\p$-vehicles traveling with speed $v_j$ at time $t$. The macroscopic observable quantities referred to such a class of vehicles are recovered as (cf.~\eqref{eq:var.macro}):
\begin{equation}
	\rho^\p(t)=\sum_{j=1}^{n^\p}f^\p_j(t), \qquad
	q^\p(t)=\sum_{j=1}^{n^\p}v_jf^\p_j(t), \qquad
	u^\p(t)=\frac{q^\p(t)}{\rho^\p(t)}.
	\label{eq:var.macro.p}
\end{equation}

We model the evolution of the $f^\p_j$'s by means of the following equation
\begin{equation}
	\frac{df^\p_j}{dt}=J^\p_j\left[\f^\p,\,\left(\f^\p,\,\f^\q\right)\right], \quad j=1,\,\dots,\,n^\p
	\label{eq:kinetic.2pop}
\end{equation}
where the term $J^\p_j\left[\f^\p,\,\left(\f^\p,\,\f^\q\right)\right]$ describes the interactions of $\p$-vehicles with all other vehicles, in which, as a result, the $\p$ vehicle assumes the velocity $v_j$. Since we consider only binary interactions, we can follow an approach frequently used for mixtures of two gases in kinetic theory, see e.g.,~\cite{brull2012EJMB,groppi2004PF,hamel1965PF}, which consists in writing the collisional operator as the sum of two terms:
\begin{equation}
	J^\p_j\left[\f^\p,\,\left(\f^\p,\,\f^\q\right)\right]=J^{\p\p}_j[\f^\p,\,\f^\p]+J^{\p\q}_j[\f^\p,\,\f^\q], \quad j=1,\,\dots,\,n^\p.
	\label{eq:J_2pop}
\end{equation}
In particular, the term $J^{\p\p}_j[\f^\p,\,\f^\p]$ accounts for \emph{self-interactions} within the population $\p$, i.e., interactions in which $\p$-vehicles play also the role of field vehicles. Conversely, the term $J^{\p\q}_j[\f^\p,\,\f^\q]$ accounts for \emph{cross-interactions} between the two populations. 
Following the same logic underlying the single population model, cf. Section~\ref{sec:1pop_interactions}, each term is written as a balance of gain and loss contributions:
\begin{equation}
	\begin{array}{l}
		J^{\p\p}_j[\f^\p,\,\f^\p]=\ds{\sum_{h,k=1}^{n^\p}}\eta_{hk}^\p A^{\p,j}_{hk}f^\p_hf^\p_k
			-f^\p_j\ds{\sum_{k=1}^{n^\p}}\eta_{jk}^\p f^\p_k \\[6mm]
		J^{\p\q}_j[\f^\p,\,\f^\q]=\ds{\sum_{h=1}^{n^\p}}\ds{\sum_{k=1}^{n^\q}}\eta_{hk}^{\p\q} B^{\p\q,j}_{hk}f^\p_hf^\q_k
			-f^\p_j\ds{\sum_{k=1}^{n^\q}}\eta_{jk}^{\p\q} f^\q_k
	\end{array}
	\qquad j=1,\,\dots,\,n^\p,
	\label{eq:self_cross-2pop}
\end{equation}
where $\A^{\p,j}$, $\B^{\p\q,j}$, $j=1,\,\dots,\,n^\p$, are the self-interaction and cross-interaction tables of games, respectively. Since the coefficients of the two tables of games model the transition probabilities, we require that
\begin{gather*}
	0\leq A^{\p,j}_{hk},\, B^{\p\q,j}_{hk}\leq 1, \quad \forall\,h,\,k,\,j,\,\p,\,\q \\
	\sum_{j=1}^{n^\p}A^{\p,j}_{hk}=\sum_{j=1}^{n^\p}B^{\p\q,j}_{hk}=1, \quad \forall\, h,\,k,\,\p,\,\q
\end{gather*}
so that for each $h,\,k$ fixed, the coefficients $A^{\p,j}_{hk}$ and $B^{\p\q,j}_{hk}$ with $j=1,\dots,\p$ form indeed discrete probability densities. This ensures that:
\[
	\sum_{j=1}^{n^\p}J^\p_j[\f^\p,\,\f^\q]=\sum_{j=1}^{n^\p}J^{\p\p}_j[\f^\p,\,\f^\p]
		+\sum_{j=1}^{n^\p}J^{\p\q}_j[\f^\p,\,\f^\q]=0,
\]
whence from~\eqref{eq:kinetic.2pop} mass conservation for each species is obtained:
\[
	\frac{d}{dt}\sum_{j=1}^{n^\p}f^\p_j=\frac{d\rho^\p}{dt}=0.
\]

Finally, the two-population model resulting from~\eqref{eq:kinetic.2pop}--\eqref{eq:self_cross-2pop} can be written as
\begin{equation}
	\frac{df^\p_j}{dt}=\sum_{h,k=1}^{n^\p}\eta_{hk}^\p A^{\p,j}_{hk}f^\p_hf^\p_k+\sum_{h=1}^{n^\p}\sum_{k=1}^{n^\q}\eta_{hk}^{\p\q} B^{\p\q,j}_{hk}f^\p_hf^\q_k
		-f^\p_j\left(\sum_{k=1}^{n^\p}\eta_{jk}^\p f^\p_k+\sum_{k=1}^{n^\q}\eta_{jk}^{\p\q} f^\q_k\right), \quad j=1,\,\dots,\,n^\p.
	\label{eq:model.2pop}
\end{equation}
Here, the {\em interaction rates} $\eta_{hk}^\p,\,\eta_{hk}^{\p\q}$ may  depend on the type of interacting vehicles and on the relative speeds between the vehicles, but, for the sake of simplicity, in the following we will assume that they are constant, let $\eta=\eta_{hk}^\p=\eta_{hk}^{\p\q}$. In particular, in all numerical tests, since we are interested in equilibrium solutions, $\eta$ will be taken equal to $1$ without loss of generality.


\begin{remark}
In~\cite{fermo2014DCDSS}, it is proven that for a single population, if $f_j(0)\geq 0\,\forall j$, then $f_j(t)\geq 0$ for all times, and the equilibrium distribution $\f^\eq$ is uniquely determined by the initial condition. Here, our numerical evidence suggests that the same properties are inherited also by the multipopulation model when the well balanced scheme of Section~\ref{sec:well-balanced} is used. This fact can also be proven, see~\cite{puppoUNPUBL}.
\end{remark}

\subsubsection{Modeling self- and cross-interactions}
The total number of $\p$-vehicles present in a stretch of road of length $L>0$ is $N^\p=L\sum_{j=1}^{n^\p}f^\p_j$. Recall that $l^\p>0$ is the characteristic length of $\p$-vehicles, therefore the total space occupied by population $\p$ along the road is $N^\p l^\p$, while the total space occupied by all vehicles is $S=\sum_{\p\in\{C,\,T\}}N^\p l^\p$. Ultimately, the \emph{fraction of road occupancy} over the length $L$ is
\begin{equation}
	s:=\frac{S}{L}=\sum_{\p\in\{C,\,T\}}\frac{N^\p}{L}l^\p=
		\sum_{\p\in\{C,\,T\}}\left(\sum_{j=1}^{n^\p}f^\p_j\right)l^\p=\sum_{\p\in\{C,\,T\}}\rho^\p l^\p.
	\label{eq:s}
\end{equation}
Let $\rhomax^\p$ be the maximum density of vehicles of the $\p$-th population, which is obtained when the road is completely filled and  $\rho^\q=0$. Obviously, given $l^C$, $l^T$, the admissible pairs of densities $(\rho^C,\,\rho^T)\in [0,\,\rhomax^C]\times [0,\,\rhomax^T]$ are those such that $0\leq s\leq 1$.

Notice that $\rhomax^\p=\frac{1}{l^\p}$, therefore $s$ can be rewritten as
\[
	s=\sum_{\p\in\{C,\,T\}}\frac{\rho^\p}{\rhomax^\p}.
\]
From this expression it is clear that $s$ is the natural generalization of the term $\frac{\rho}{\rhomax}$ appearing in the probabilities $P$, $Q$ of the single-population model, cf. Section~\ref{sec:1pop_interactions}. Therefore we will assume that in the two-population model the transition probabilities depend on $s$. In other words, following the same logic of the single population case, the elements of the table of games depend on the local state of occupancy of the road, which, when more than one population is present, is given by $s$.  More precisely, $P$ is a decreasing function of $s$, while $Q$ is an increasing function of $s$. Following~\eqref{eq:probabilities}, the simplest choice is 
\begin{equation}
	P=\alpha(1-s), \qquad Q=(1-\alpha)s.
	\label{eq:probab_s}
\end{equation}
Other choices are possible, as in the case of  the $\gamma$-law \eqref{eq:gamma_law}, see \S \ref{sec:equilibria} and Fig. \ref{fig:square}. It would also be possible to consider different reactive behaviours in the two populations. But the simplest choice, which, as we will see, results in a realistic macroscopic behaviour, is to suppose that both types of vehicles react in the same way to the single parameter which accounts for the state of occupation of the road, which is $s$. The result of this ansatz is that the tables of games differ only in their dimensions.
  
For the matrices $\A^{\p,j}$ we use the same construction as in~\eqref{h<k:1pop}--\eqref{3h=k:1pop}, because they express self-interactions within either population of vehicles regardless of the presence of the other population. The only difference is that they are $n^\p\times n^\p$ matrices, hence their dimensions change depending on the specific population.

The tables of games $\B^{\p\q,j}$ are instead $n^\p\times n^\q$ rectangular matrices, therefore we need to slightly revise the basic interaction rules of the single-population model in order to take into account the different maximum speeds of the two populations in the description of the speed transitions.

The table $\B^{CT,j}$ gives the probability distribution that candidate cars switch to the test speed $v_j$ upon interacting with field trucks. The coefficients $B^{CT,j}_{hk}$ are constructed as in~\eqref{h<k:1pop}--\eqref{3h=k:1pop}, considering however that the case~\eqref{2h=k:1pop} applies only for $h=2,\,\dots,\,n^T\leq n^C$ and that the case~\eqref{3h=k:1pop} applies only if $n^T=n^C$. Thus the matrices $\B^{CT,j}$ are $n^T\times n^C$.

Conversely, the table $\B^{TC,j}$ gives the probability distribution that candidate trucks switch to the test speed $v_j$ upon interacting with field cars. If the candidate truck is faster than the field car then the coefficient $B^{TC,j}_{hk}$ is constructed as in~\eqref{h>k:1pop}. Instead, when interactions involve also accelerations it is necessary to consider that the candidate truck might not be able to increase its speed if it is already traveling at its maximum possible velocity $v_{n^T}$, which, unless $n^T=n^C$, is in general smaller than $\vmax$. In other words, in the case~\eqref{h<k:1pop} the option of accelerating may not apply. Hence for candidate trucks traveling at speed $v_{n^T}$ which encounter faster field cars, i.e., cars traveling at speed $v_k$ with $k=n^T+1,\,\dots,\,n^C$, we modify the transition probabilities~\eqref{h<k:1pop} as:
\[
	B^{TC,j}_{n^Tk}=
		\begin{cases}
			1 & \text{if\ } j=n^T \\
			0 & \text{otherwise},
		\end{cases}
		\qquad k>n^T.
\]
Notice instead that the other cases which include an acceleration, namely~\eqref{1h=k:1pop} and~\eqref{2h=k:1pop}, can be borrowed from the single-population model without modifications.

\medskip

Interestingly, model~\eqref{eq:model.2pop} along with the tables of games discussed above satisfies an \emph{indifferentiability principle} similar to the one valid for kinetic models of gas mixtures, see e.g.,~\cite{andries2001REPORT}: when all the species composing the gas are identical one recovers the equations of a single-component gas. In the present case, the indifferentiability principle can be stated as follows:
\begin{theorem}[Indifferentiability principle]
Assume that the two types of vehicles are identical, i.e., they have the same physical and kinematic characteristics, and $f_j^{\p}$ exists $\forall\,\p\in\{C,T\}$, $\forall\,j$. Then the total distribution function
\begin{equation} 
	f_j:=\sum_{\p\in\{C,\,T\}}f^\p_j
	\label{eq:definit}
\end{equation}
obeys the evolution equations of the single-population model~\eqref{eq:model.1pop}.
\label{theo:indiffpr}
\end{theorem}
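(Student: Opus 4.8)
The plan is to use the hypothesis that the two populations are identical in order to collapse every piece of structural data of model~\eqref{eq:model.2pop} onto the single-population data of~\eqref{eq:model.1pop}, and then to recover~\eqref{eq:model.1pop} for the aggregate $f_j$ of~\eqref{eq:definit} by a purely algebraic summation. First I would spell out what ``identical'' entails: equal lengths $l^C=l^T=:l$ and equal maximum speeds $V^C=V^T=\vmax$. Equal maximum speeds force $n^C=n^T=:n$ and make the lattices $\mathcal{V}^C$, $\mathcal{V}^T$ coincide, while equal lengths give $\rhomax^C=\rhomax^T=\rhomax=1/l$, so that the road occupancy~\eqref{eq:s} reduces to $s=(\rho^C+\rho^T)l=\rho/\rhomax$ with $\rho:=\sum_j f_j=\rho^C+\rho^T$. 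Thus the density argument of the transition probabilities $P$, $Q$ in~\eqref{eq:probab_s} coincides with the argument $\rho/\rhomax$ of~\eqref{eq:probabilities}.

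The key preliminary step is to verify that, under this hypothesis, all four tables of games coincide with the single-population table $\A^j$ of~\eqref{h<k:1pop}--\eqref{3h=k:1pop}. The self tables $\A^{C,j}$, $\A^{T,j}$ are built by the same rules and now share the size $n\times n$, hence equal $\A^j$. For the cross tables I would check that the population-specific modifications become vacuous: in $\B^{CT,j}$ the exceptional cases~\eqref{2h=k:1pop}--\eqref{3h=k:1pop} are exactly those of the single-population construction once $n^T=n^C=n$, and in $\B^{TC,j}$ the truncation imposed on candidate trucks at speed $v_{n^T}$ meeting faster cars ($k>n^T$) involves no admissible indices when $n^C=n^T$. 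Hence $A^{\p,j}_{hk}=B^{\p\q,j}_{hk}=A^j_{hk}$ for all indices, and the common constant rate $\eta$ makes the two operators in~\eqref{eq:self_cross-2pop} structurally identical.

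With this reduction, the final step is summation of~\eqref{eq:model.2pop} over $\p\in\{C,T\}$. In the gain term the factor $\sum_{\p}(f^\p_h f^\p_k+f^\p_h f^\q_k)$ collapses, via the distributive identity $(f^C_h+f^T_h)(f^C_k+f^T_k)=f_h f_k$, to $f_h f_k$, yielding $\sum_{h,k}\eta A^j_{hk}f_h f_k$; in the loss term $\sum_k(f^\p_k+f^\q_k)=\rho$ for each $\p$, so the two losses add up to $f_j\sum_k\eta f_k$. Therefore $df_j/dt=\sum_{h,k}\eta A^j_{hk}f_h f_k-f_j\sum_k\eta f_k$, which is precisely~\eqref{eq:model.1pop}, with $\A^j$ understood to be evaluated at $s=\rho/\rhomax$.

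I do not expect a genuine analytic obstacle here: the heart of the argument is the bilinear factorization typical of indifferentiability principles in gas-mixture kinetic theory. The only delicate point is the bookkeeping of the tables of games, namely confirming that the rectangular cross tables $\B^{CT,j}$, $\B^{TC,j}$ degenerate into the square single-population table $\A^j$ once the two speed lattices coincide, and that the $s$-dependence of $P$, $Q$ matches the $\rho/\rhomax$-dependence of~\eqref{eq:probabilities}. Once these identifications are carried out, the summation is routine.
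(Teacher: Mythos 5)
Your proposal is correct and follows essentially the same route as the paper's own proof: identify $s=\rho/\rhomax$ and the collapse of all tables of games onto the single-population $\A^j$ when lengths and speed lattices coincide, then sum~\eqref{eq:model.2pop} over $\p$ and use the bilinear factorization $\bigl(\sum_\p f^\p_h\bigr)\bigl(\sum_\p f^\p_k\bigr)=f_hf_k$. Your explicit check that the exceptional cases in $\B^{CT,j}$ and $\B^{TC,j}$ become vacuous when $n^C=n^T$ is a detail the paper only asserts implicitly, but it does not change the argument.
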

\begin{proof}
If the two types of vehicles are the same we have $l^C=l^T=:l$, $\rhomax^C=\rhomax^T=\frac{1}{l}=:\rhomax$. It follows
\[
	s=\frac{\rho^C+\rho^T}{\rhomax}=\frac{\rho}{\rhomax}.
\]

Since $\mathcal{V}^T \subseteq \mathcal{V}^C$, if the maximum speed is the same then $n^C=n^T:=n$ and $\mathcal{V}^T = \mathcal{V}^C$. This implies $\A^{\p,j}=\B^{\p\q,j}=\A^j$, the latter being the table of games of the single-population model, cf. Section~\ref{sec:1pop_interactions}. Further, since the two populations are identical, the interaction rates are the same. So taking these facts into account and summing~\eqref{eq:model.2pop} over $\p$ yields:
\[
	\frac{d}{dt}\sum_{\p\in\{C,\,T\}}f^\p_j=\sum_{h,k=1}^{n}\eta_{hk} A^j_{hk}\left(\sum_{\p\in\{C,\,T\}}f^\p_h\right)\left(f^\p_k+f^\q_k\right)
		-\left(\sum_{\p\in\{C,\,T\}}f^\p_j\right)\sum_{k=1}^{n}\eta_{jk}\left(f^\p_k+f^\q_k\right),
\]
whence, using the definition~\eqref{eq:definit}, we have:
\[
	\frac{df_j}{dt}=\sum_{h,k=1}^{n}\eta_{hk}A^j_{hk}f_hf_k-f_j\sum_{k=1}^{n}\eta_{jk}f_k,
\]
which concludes the proof.
\end{proof}

\begin{remark}
In~\cite{andries2001REPORT} the indifferentiability principle is proved for a model featuring a single collision operator, which hinders the description of cross-interactions among particles of different species in the mixture. In more standard models for gas mixtures the collision terms are separate, as in our case, but the indifferentiability principle holds only at equilibrium. Here, instead, Theorem~\ref{theo:indiffpr} holds at all times, moreover without having to merge the two collision terms into one.
\end{remark}

\subsection{A well-balanced formulation for computing equilibria}
\label{sec:well-balanced}
Our numerical evidence suggests that, for any pair of densities $(\rho^C,\,\rho^T)\in[0,\,\rhomax^C]\times [0,\,\rhomax^T]$, with  $0\leq s \leq 1$, all initial distributions $(\f^C(0)\geq 0,\,\f^T(0)\geq 0)$ such that $\sum_{j=1}^{n^\p}f^\p_j(0)=\rho^\p$, $\p\in\{C,\,T\}$, converge in time to the same pair of equilibrium distributions $(\f^{\eq,C},\,\f^{\eq,T})$, which is therefore uniquely determined by $\rho^C$ and $\rho^T$. The proof of this and other analytical properties can be found in~\cite{puppoUNPUBL}. To get the correct equilibrium, however, it is important to devise a well balanced numerical scheme. As we will see, round-off error can drive the solution to spurious equilibrium states, if the model is not integrated properly.

Under the simplifying assumption $\alpha=1$, in~\cite{fermo2014DCDSS} existence and uniqueness of stable equilibria for the single-population model~\eqref{eq:model.1pop} are established and their analytic expressions are computed. In the general case, it is necessary to integrate numerically in time the system of ODEs~\eqref{eq:model.2pop} until steady state is reached.

It is worth pointing out that in~\cite{fermo2014DCDSS} equilibria are studied by rewriting the loss term $-f_j\sum_{k=1}^{n}f_k$ of~\eqref{eq:model.1pop} in the analytically equivalent form $-\rho f_j$. This allows one to take advantage of the fact that $\rho$ is indeed a parameter of system~\eqref{eq:model.1pop} fixed by the initial condition, since it is constant in time. However, such a simplification cannot be carried out when the system is integrated numerically, because of instabilities triggered by round-off errors. 

For the sake of simplicity, we illustrate this phenomenon for the single-population model~\eqref{eq:model.1pop} but our considerations apply to the two-population model~\eqref{eq:model.2pop} as well.
For this purpose, we consider the numerical approximation of the following two analytically equivalent formulations of the single-population model:
\begin{subequations}
\begin{gather}
	\frac{df_j}{dt}=\sum_{h,k=1}^n A_{hk}^jf_hf_k-f_j\sum_{k=1}^nf_k \label{eq:well-balanced} \\
	\frac{df_j}{dt}=\sum_{h,k=1}^n A_{hk}^jf_hf_k-\rho f_j \label{eq:not_well-balanced},
\end{gather}
\end{subequations}
where the first, which we will call \emph{well-balanced}, leads to the computation of the correct equilibria while the second does not preserve stationary solutions and possibly leads to a violation of mass conservation. The context is similar to the construction of well-balanced numerical schemes for balance laws, where particular care is needed in order to preserve stationary solutions at the discrete level, see e.g.,~\cite{leveque1998JCP,noelleUNPUBL} and references therein.

\begin{figure}[!t]
\centering
\includegraphics[width=\textwidth]{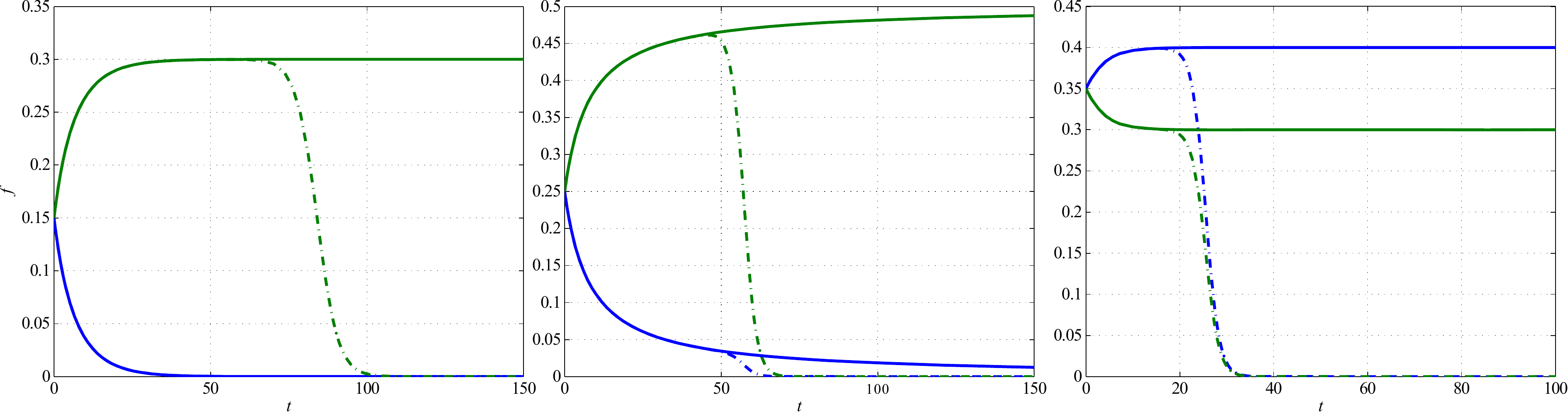}
\caption{Solution to the spatially homogeneous problem for the single-population model obtained with (continuous line) and without (dashed line) the well-balanced formulation~\eqref{eq:well-balanced}. Left $\rho=0.3$, center $\rho=0.5$, right $\rho=0.7$.}
\label{fig:WB}
\end{figure}

Let $y(t)=\sum_{j=1}^{n}f_j(t)$. Summing over $j$ both sides of~\eqref{eq:well-balanced} yields $\frac{dy}{dt}=0$ as expected, while the same operation performed on~\eqref{eq:not_well-balanced} gives
\[
	\frac{dy}{dt}=(y-\rho)y.
\]
In this case $y$ is in general not constant in time and moreover two equilibria exist, $y^\eq_1=0$ and $y^\eq_2=\rho$, where the first is stable and attractive, whereas the second is unstable. This means that mass conservation $y(t)=\rho$ holds for all $t$ if and only if $y(0)=\rho$ and $y$ is computed without round-off. Otherwise, any small perturbation will drive $y$ away from the unstable equilibrium $y^\eq_2=\rho$ towards the stable equilibrium $y^\eq_1=0$ and mass conservation fails.

Figure~\ref{fig:WB} shows the results of the numerical integration of the two equations~\eqref{eq:well-balanced},~\eqref{eq:not_well-balanced} in the simple case with $n=2$ speed classes and $\alpha=1$, starting from initial conditions for which the density is $\rho=0.3$, $\rho=0.5$, or $\rho=0.7$ respectively. As proved in~\cite{fermo2014DCDSS}, the correct equilibrium distribution is $\f^\eq=(0,\,\rho)$ for $\rho\leq 0.5$, but from the first two panels of Fig.~\ref{fig:WB} it can be seen that only the solution of~\eqref{eq:well-balanced} converges to such an equilibrium, while~\eqref{eq:not_well-balanced} is attracted toward the state $(0,\,0)$ which violates mass conservation. For $0.5<\rho<1$ the correct equilibrium distribution $\f^\eq$ consists instead of two strictly positive values, which once again are reached only by the numerical solution of the well-balanced formulation~\eqref{eq:well-balanced} as it is evident from the third panel of Fig.~\ref{fig:WB}.



\subsection{Transition from free to congested phase}
\label{sec:equilibria}
In this section, we compute the value of $s$ which determines the transition from the free to the congested phase. For this purpose, we will compute 
the equilibria of the system~\eqref{eq:model.2pop} in the free flow phase. Our goal is to investigate analytically the main characteristics of the fundamental diagrams resulting from our model. In particular, using the equilibria of the sum of the two distribution functions, we compute the value of occupied space, the  \emph{critical space} $s_c$, at which the transition from the free to the congested phase occurs, and we will see how $s_c$ depends on the choice of the probability $P$. 

In order to compute the equilibria, we need the explicit expression of the interaction matrices.
We will write the table of games explicitly for the $\gamma$-law \eqref{eq:gamma_law}, with $\alpha=1$. Thus we have
\[
P=1-s^{\gamma}, \qquad Q=0.
\]
As a result, the structure of the tables of games is considerably simplified, nevertheless the wealth of information which can be extracted from the model is still surprising. Note also the sparsity pattern of the matrices $\A^{\p,j}$, $\B^{\p\q,j}$, which permits a fast evaluation of the collision terms in~\eqref{eq:kinetic.2pop}. 

Let $R:=1-P$. We report only the non zero elements, drawing a circle around the elements which belong to the $j$-th row and column of each of the interaction matrices, $\A^{\p,j}$ and $\B^{\p\q,j}$ respectively. Inside the circle we indicate the value of the corresponding element.

Concerning the self-interaction table of games we have:
\[
	\A^{\p,1}=
		\begin{bmatrix}
			\circled{R} & \circled{R}& \circled{R} & \cdots & \circled{R} \\
			\circled{R} \\
			\circled{R} \\
			\vdots \\
			\circled{R}         
		\end{bmatrix},
	\qquad
	\A^{\p,n^\p}=
		\begin{bmatrix}	
			& & & & \circled{0} \\
			& & & & \circled{0} \\
			& & & & \vdots \\
			& & & P & \circled{P} \\
			\circled{P} & \cdots & \circled{P} & \circled{P} & \circled{1}
		\end{bmatrix},
\]
while the general expression for $1<j<n^\p$ is
\[
	\A^{\p,j}=
		\begin{bmatrix}
			& & & \circled{0} & & \\
			& & & \vdots & & \\
			& & P & \circled{P} & \cdots & P \\
			\circled{P} & \cdots & \circled{P} & \circled{R} & \cdots & \circled{R} \\
			& & & \circled{R} \\
			& & & \vdots \\
			& & & \circled{R}
		\end{bmatrix}.
\]
These matrices are all $n^\p\times n^\p$.
The cross-interaction matrices $\B^{CT,j}$ between cars (candidates) and trucks (fields) have the same structure as the $\A^{\p,j}$'s, apart from being rectangular of dimensions $n^C\times n^T$. Differences however arise for $j\geq n^T$, for example:
\[
	\B^{CT,n^T}=
		\begin{bmatrix}
			& & & \circled{0} \\
			& & & \vdots \\
			& & P & \circled{P} \\
			\circled{P} & \cdots & \circled{P} & \circled{R} \\
			& & & \circled{R} \\
			& & & \vdots \\
			& & & \circled{R}
		\end{bmatrix},
	\qquad
	\B^{CT,n^T+1}=
		\begin{bmatrix}
			\\
			\\
			\\
			& & & P \\
			\circled{P} & \cdots & \cdots & \circled{P} \\
			\\
			\\
			\\
		\end{bmatrix}
\]
and in general
\[
	\B^{CT,j}=
		\begin{bmatrix}
			\\
			\\
			\\
			\\
			\\
			\circled{P} & \cdots & \cdots & \circled{P} \\
			\\
			\\
		\end{bmatrix}, \quad j>n^T+1.
\]

Finally, the cross-interaction matrices $\B^{TC,j}$ between trucks (candidates) and cars (fields) are $n^T\times n^C$. They can in turn be easily derived from the $\A^{\p,j}$'s, the only different case being the one for $j=n^T$:
\[
	\B^{TC,n^T}=
		\begin{bmatrix}
			& & & \circled{0} \\
			& & & \vdots \\
			& & P & \circled{P} & \cdots & P \\
			\circled{P} & \cdots & \circled{P} & \circled{1} & \circled{1} & \circled{1}      
		\end{bmatrix}.
\]

\bigskip

We will assume that the distribution functions are non negative in time provided $f^{\p}_j(0)\geq 0, \,\forall j$. Our numerical evidence supports this assumption, but see also~\cite{puppoUNPUBL}. Let $F_j=\sum_{\p}f^{\p}_j$ and $\rho=\sum_{\p} \rho^{\p}$.

Summing the equations $\frac{df_1^{\p}}{dt}=0$ and $\frac{df_1^{\q}}{dt}=0$, we have:
\begin{equation}
	-R \left(F_1\right)^2+(2R-1)\rho F_1=0
	\label{eq:1}
\end{equation}
which is a quadratic equation whose non-negative roots are:
\[
	F_1=
	\begin{cases}
		0\\
		\frac{(2R-1)\rho}{R}, \quad \text{if $R\geq\frac12$}.
	\end{cases}
\]
Since the leading coefficient of the equation~\eqref{eq:1} is negative, the stable and attractive equilibrium is always the largest root: thus, for $R\leq 1/2$, the equilibrium solution is $F_1=0$. Since the two  distribution functions are non negative, the equilibrium for each population is $f^{\p}_1=0$, i.e. no vehicle travels with velocity $v_1=0$.

Henceforth, we take $R\leq 1/2$ and we suppose $f^{\p}_{j-1}=0, \p=C,T$, $\forall\,j<n^T$ (inductive hypothesis) and we prove that $f^{\p}_j=0$. By summing again the equations $\frac{df^{\p}_j}{dt}=0$, we obtain:
\[
	-R\left(F_j\right)^2+F_j\left[(1-3R)\sum_{k=1}^{j-1}F_k+(2R-1)\rho\right]+(1-R)F_{j-1}\left[\rho-\sum_{k=1}^{j-2}F_k\right]=0
\]
and using the inductive step, this expression reduces to
\[
	-R\left(F_j\right)^2+F_j(2R-1)\rho=0
\]
which again has a stable root at $F_j=0$ for $R\leq 1/2$, closing the induction.
Therefore, if $R\leq 1/2$, the stable and attractive equilibrium of each species is $f^{\p}_j=0$, $\forall\, j=1,\dots,n^T-1$. For $j=n^T$, using mass conservation for trucks we have
\[
	\rho^T = \sum_{j=1}^{n^T} f^T_j = f^T_{n^T}.
\]
Thus, for $R\leq 1/2$, all trucks travel with the maximum velocity allowed in $\mathcal{V}^T$, at 
equilibrium. Using this result, the remaining equations for $f^C_j, j=n^T,\dots,n^C$ can be written once $\f^T$ is known. 
The equilibrium related to the distribution function of cars traveling at the velocity $v_{n^T}$ can be found by solving the quadratic equation of $f_{n^T}^C$ resulting from $\frac{d}{dt}f_{n^T}^C=0$:
\[
	-R\left(f_{n^T}^C\right)^2+f_{n^T}^C\left[(2R-1)\rho^C-\rho^T\right]+R\rho^C\rho^T=0.
\]
Since
\[
	\Delta_{n^T}=\left[(2R-1)\rho^C-\rho^T\right]^2+4R^2\rho^C\rho^T
\]
is non-negative $\forall\,R\in[0,1]$, all solutions are real. In particular, clearly they have opposite sign if $\rho_C$ and $\rho_T$ are non zero, and the positive root is
\[
	f_{n^T}^C=\frac{(2R-1)\rho^C-\rho^T+\sqrt{\Delta_{n^T}}}{2R}.
\]
This root represents the stable and attractive equilibrium. This result depends implicitly on the assumption $R\leq 1/2$ because it exploits the fact that $\f^{\eq,T}=[0,\dots,0,\rho_T]$ which holds if $R\leq 1/2$. For $j=n^T+1,\dots,n^C-1$, equilibrium distributions of cars result from the equations
\[
	-R\left(f_j^C\right)^2+f_j^C\left[(1-3R)\sum_{k=n^T}^{j-1} f_k^C + (2R-1)\rho^C-R\rho^T\right]+(1-R)c_j=0,\quad j=n^T+1,\dots,n^C-1,
\]
where the coefficient $c_j$ is
\[
	c_j=
	\begin{cases}
	f^C_{n^T}\rho,	&\text{if\, $j=n^T+1$}\\
	\ds{f_{j-1}^C\left(\rho^C-\sum_{k=n^T}^{j-2} f_k^C\right)}, &\text{if\, $j=n^T+2,\dots,n^C-1$.}
	\end{cases}
\]
Again, the roots of the quadratic equation are real and of opposite sign. The largest one is
\[
f_j^C=\frac{\ds{(1-3R)\sum_{k=n^T}^{j-1} f_k^C + (2R-1)\rho^C-R\rho^T+\sqrt{\Delta_j}}}{2R}, \quad j=n^T+1,\dots,n^C-1
\]
and it is the stable and attractive equilibrium, where
\[
	\Delta_j=\left[(1-3R)\sum_{k=n^T}^{j-1} f_k^C + (2R-1)\rho^C-R\rho^T\right]^2+4R(1-R)c_j, \quad j=n^T+1,\dots,n^C-1
\]
is the non-negative discriminant of the equation for $f_j^C$, $j=n^T+1,\dots,n^C-1$.

Finally, by mass conservation, the asymptotic distribution related to cars traveling at the maximum velocity $v_{n^C}$ is
\[
	f_{n^C}^C=\rho^C-\sum_{k=n^T}^{n^C-1}f_k^C.
\]
Note that if there are no trucks, $\rho^T=0$, the equilibrium distribution for the cars is $\f^{\eq,C}=[0,\dots,0,\rho^C]$.
Since now the equilibrium distributions are known, we obtain the total flux of vehicles in the case $R\leq 1/2$,
\begin{equation}
q(\rho^C,\rho^T) =	\rho^T v_{n^T} + \sum_{j=n^T}^{n^C} v_j f^C_j.
	\label{eq:free_flow}
\end{equation}
Therefore the flux depends not only on $R$, but also on the composition of the mixture. When the critical value $R=1/2$ is crossed, $f^C_j,\,f^T_j$, $j=1,\dots,n^T-1$ are turned on, meaning that there are vehicles at lower velocities. This  leads to a decrease in the flow values. 

We conclude that the maximum flow is found for $R=1/2$, which means that the critical space, across which the phase transition occurs, is given by $s_c=\left(1/2\right)^{1/\gamma}$.  Thus the critical space depends on the particular $\gamma$-law chosen. 

The maximum traffic flow is obtained at $R=1/2$, for $\rho^T=0$, and it corresponds to $\vmax\rho_{\max}^C/2^{1/\gamma}$, because then $R=1/2$ implies that the flow is composed only of cars travelling at maximum speed, thus the slope of the fundamental diagram is strongly dependent on $\vmax$. 
A more detailed study of equilibria is shown in~\cite{puppoUNPUBL}.

%

\section{Fundamental diagrams of the two-population model}
\label{sec:fundamental}
In this section we investigate numerically the fundamental diagrams resulting from the two-population kinetic model~\eqref{eq:model.2pop}. As we will see, they do not only capture the main qualitative features of the experimental diagrams of Fig.~\ref{fig:exp_diag}, including especially the data dispersion in the congested flow regime, but they also provide tools to better understand the behavior of traffic at the macroscopic scale. 

In all cases studied, system~\eqref{eq:model.2pop} is integrated numerically up to equilibrium, using the well balanced formulation~\eqref{eq:well-balanced}. Once the equilibrium distributions have been computed, the flux and the mean speed are obtained as moments of the kinetic distributions as indicated in~\eqref{eq:var.macro.p}. Since in the space homogeneous case the total density $\rho=\sum_\p\rho^\p$ is constant in time, it acts as a parameter, fixed by the initial condition, characterizing the  macroscopic quantities.

As a matter of fact, each $\rho^\p$ is also constant in time, therefore the fraction of road occupancy $s$ defined in~\eqref{eq:s} remains also stationary. It is then possible to study the flux and mean speed at equilibrium as functions of the density, and also as functions of $s$. Summarizing, we will study two types of equilibrium diagrams:
\begin{itemize}
\item \emph{Flux-density diagrams}, that is diagrams relating the total flux at equilibrium $q=\sum_\p q^{\eq,\p}=\sum_\p\sum_{j=1}^{n^\p}v_jf^{\eq,\p}_j$ to the total density $\rho=\sum_\p\rho^\p$, which corresponds to the total number of vehicles per unit length, irrespective of the size of the different vehicles. Experimental diagrams are indeed expected to represent such a relationship.
\item \emph{Flux-space diagrams}, that is diagrams relating the total flux at equilibrium to the fraction of road occupancy $s$.
\end{itemize}

Except when otherwise stated, all simulations are performed with the parameters indicated in Table~\ref{tab:parameters}, expressed in kilometers. Initially we consider $n^C=3$ speed classes for cars and $n^T=2$ speed classes for trucks, hence the corresponding spaces of microscopic speeds are
\[
	\mathcal{V}^C=\{0,\,50~\unit{km/h},\,100~\unit{km/h}\}, \qquad
	\mathcal{V}^T=\{0,\,50~\unit{km/h}\},
\]
cf. Section~\ref{sec:2pop.kin}.

\begin{table}[!t]
\caption{Parameters of model~\eqref{eq:model.2pop} common to all simulations.}
\label{tab:parameters}
\begin{center}
\begin{tabular}{clc}
Parameter & Description & Value \\
\hline
\hline
$\alpha$ & Environmental parameter & $1$ \\
$l^C$ & Typical length of a car & $4~\unit{m}$ \\
$l^T$ & Typical length of a truck & $12~\unit{m}$ \\
$\rhomax^C$ & Maximum car density & $250~\unit{vehicles/km}$ \\
$\rhomax^T$ & Maximum truck density & $83.3~\unit{vehicles/km}$ \\
$\vmax$ & Maximum speed & $100~\unit{km/h}$ \\
\hline
\end{tabular}
\end{center}
\end{table}

\begin{table}[!t]
\caption{Deterministic pairs $(\rho^C,\,\rho^T)$ used in the fundamental diagrams of Figs.~\ref{fig:ing_nornd}--\ref{fig:num_nornd} for given values of the fraction of road occupancy $s$.}
\label{tab:combinations}
\begin{center}
\begin{tabular}{lllcc}
Combination type & Marker & Expression & $\rho^C$ & $\rho^T$ \\
\hline
\hline
Space occupied mostly by cars & Crosses & $\rho^Tl^T=\frac{1}{2}\rho^Cl^C$ & $\frac{2s}{3l^C}$ & $\frac{s}{3l^T}$ \\
Space evenly occupied by cars and trucks & Circles & $\rho^Tl^T=\rho^Cl^C$ & $\frac{s}{2l^C}$ & $\frac{s}{2l^T}$ \\
Space occupied mostly by trucks & Dots & $\rho^Tl^T=2\rho^Cl^C$ & $\frac{s}{3l^C}$ & $\frac{2s}{3l^T}$ \\
\hline
\end{tabular}
\end{center}
\end{table}

\begin{figure}[!t]
\centering
\includegraphics[width=\textwidth]{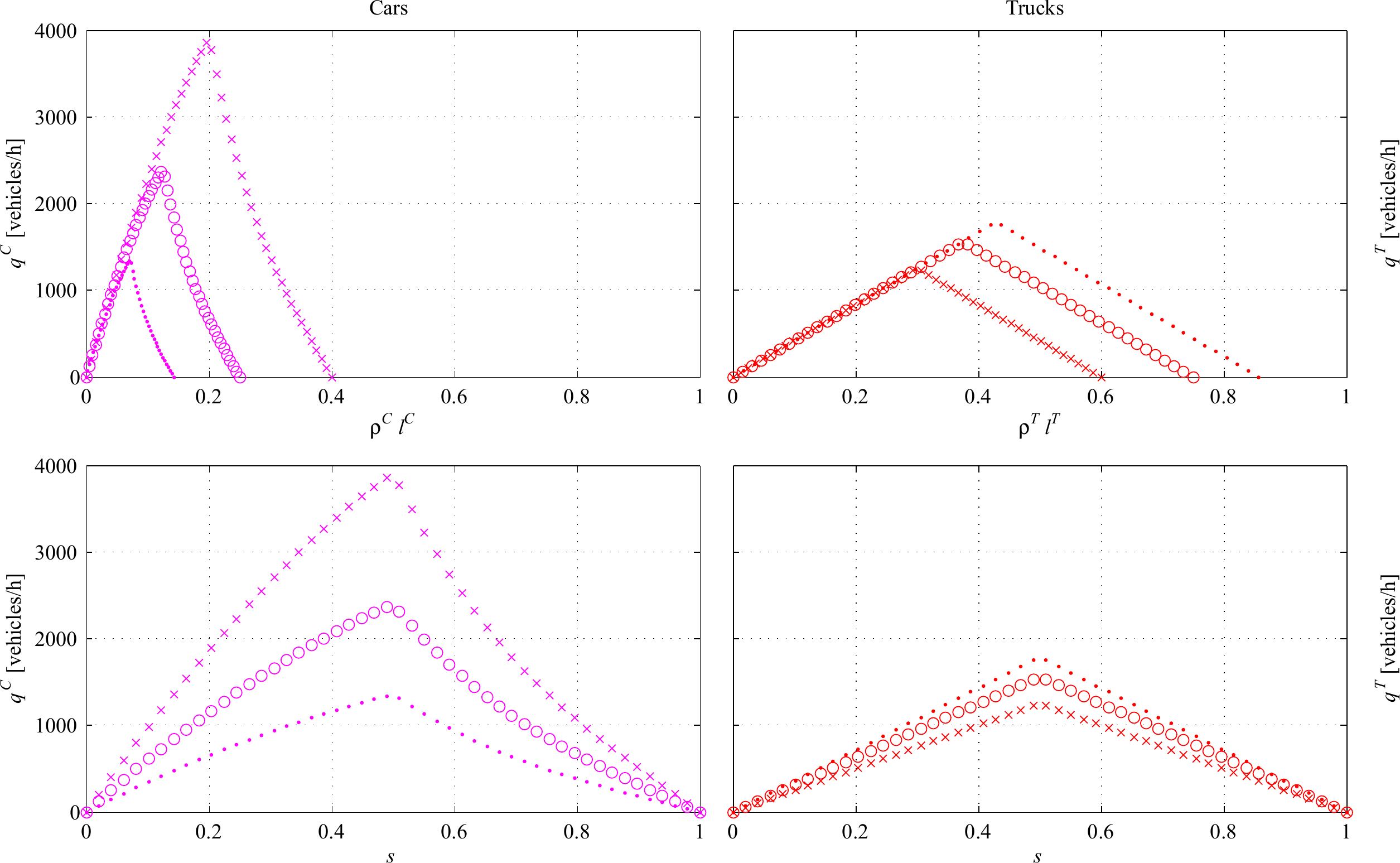}
\caption{Flux-space diagrams for the three conditions of road occupancy listed in Table~\ref{tab:combinations}.}
\label{fig:ing_nornd}
\end{figure}

In Fig.~\ref{fig:ing_nornd} we show the flux-space diagrams of each class of vehicles obtained using deterministic initial conditions: for each $s\in [0,\,1]$ we select three prototypical pairs $(\rho^C,\,\rho^T)\in [0,\,\rhomax^C]\times [0,\,\rhomax^T]$ such that $\rho^Cl^C+\rho^Tl^T=s$, corresponding to different conditions of road occupancy, cf. Table~\ref{tab:combinations}. The resulting fundamental diagrams are qualitatively similar to those obtained from the single-population model~\eqref{eq:kinetic.1pop} with analogous numbers of speed classes. For instance, the fundamental diagram of the trucks alone compares well with the one shown in Fig.~\ref{fig:tosin} with $n=2$ speed classes.

All plots in Fig. \ref{fig:ing_nornd} show clearly that there is a critical fraction of occupied space, beyond which the flow starts to decrease.
On the top section of the figure, the two subplots show that the critical space for each species changes depending on the mixture we consider. In fact, the space occupied by a class of vehicles is only one contribution to the fraction of occupied space which determines the transition matrices. In other words, even the dynamics of a single species depends on the dynamics of the complete mixture. Consequently, the flow depends on the composition of traffic. 

In the bottom section of the figure,  the flow of cars and trucks is shown as a function of the total fraction of occupied space $s$. One can immediately note that there is a single value for the critical space which corresponds to $s_c=\tfrac12$ for all three combinations. This result seems to suggest that the transition from the free to the congested phase  does not depend on how the road is occupied but on how much of it is occupied. This value of $s_c$ depends on the particular choice $\gamma=1$ in the expression of the probability $P$, see \S \ref{sec:equilibria}.


\begin{figure}[!t]
\centering
\includegraphics[width=\textwidth]{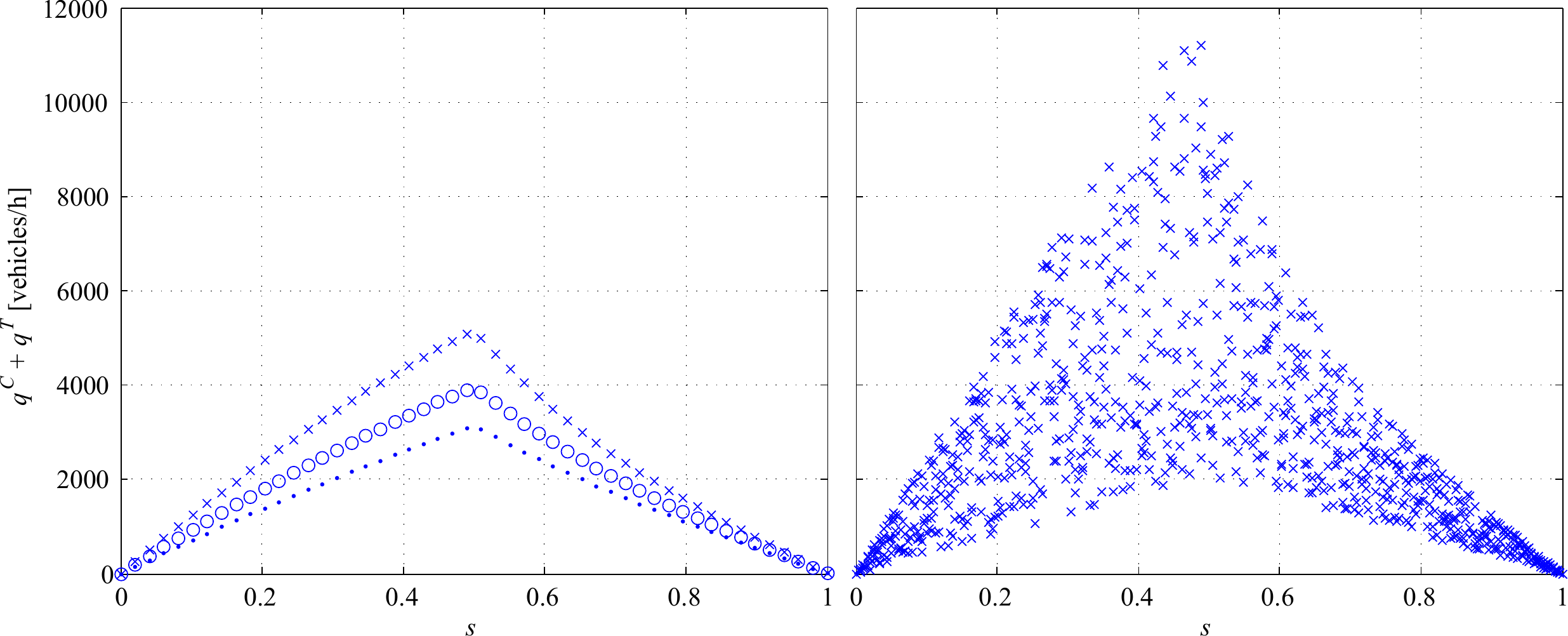}
\caption{Flux-space diagrams. Left: deterministic choice of the pairs $(\rho^C,\,\rho^T)$ according to Table~\ref{tab:combinations}. Right: random choice of the pairs $(\rho^C,\,\rho^T)$ for each $s\in [0,\,1]$.}
\label{fig:DiagFond_ing}
\end{figure}

In Fig.~\ref{fig:DiagFond_ing} we compare the fundamental diagrams obtained by using either the three deterministic pairs $(\rho^C,\,\rho^T)$ given in Table~\ref{tab:combinations} or three pairs chosen randomly for each $s$. 
The left of Fig. \ref{fig:DiagFond_ing} shows the total flux as a function of $s$, again, for the three combinations of Table \ref{tab:combinations}, and for three random combinations, for each fixed $s$. Here the role of the critical value $s=\tfrac12$ is even more apparent.  In spite of the apparent data dispersion, this diagram does not reproduce the experimental data, because the information brought by the fraction of road occupancy $s$ is too synthetic to take into account the heterogeneity of traffic. 

Motivated by this argument, 
now we turn to flux-density diagrams, which give the flux as a function of the number of vehicles per kilometer. Indeed experimental fundamental diagrams are expected to result out of this type of observations.
In this case, the composition of traffic is taken into account, because the same fraction of occupied space $s\in[0,1]$ can be obtained by different initial densities $\rho_C,\,\rho_T$. 


\begin{figure}[!t]
\centering
\includegraphics[width=\textwidth]{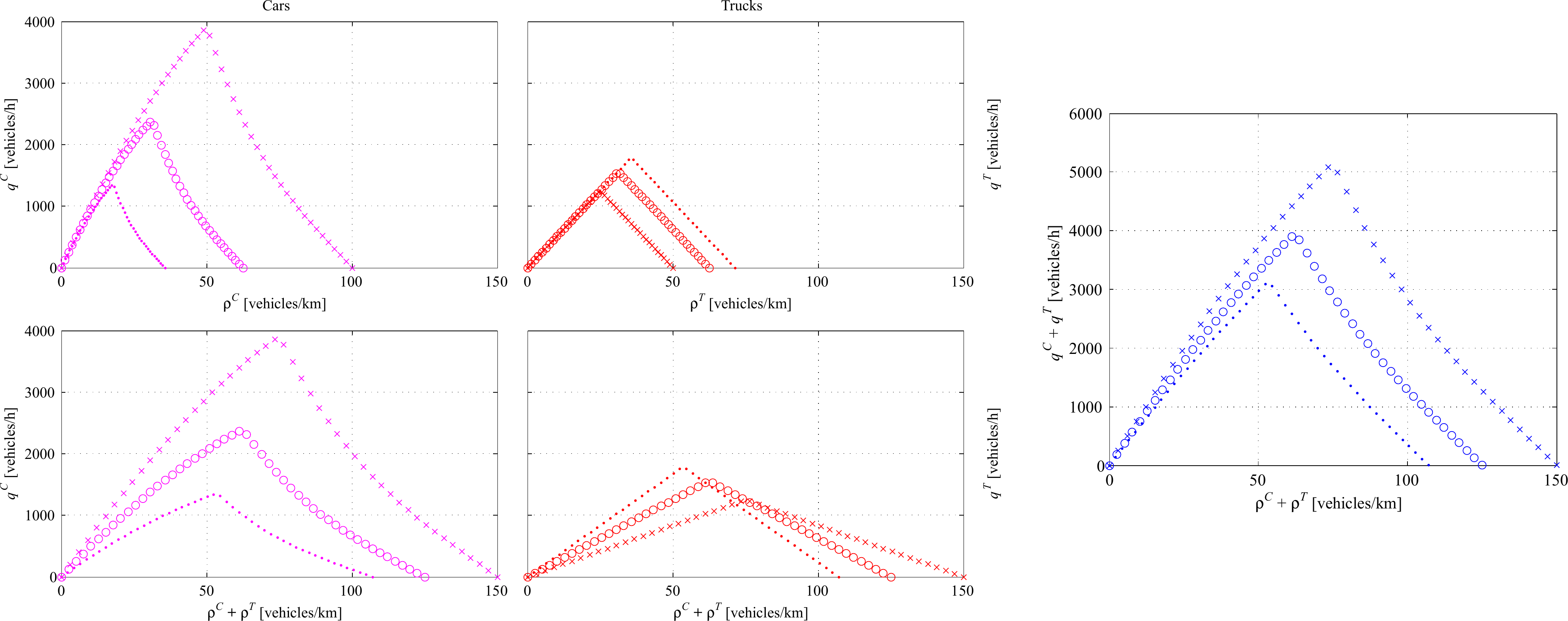}
\caption{Flux-density diagrams for the three conditions of road occupancy listed in Table~\ref{tab:combinations}.}
\label{fig:num_nornd}
\end{figure}

Again, for each $s\in [0,\,1]$ the graphs in Fig.~\ref{fig:num_nornd} are obtained by taking three pairs $(\rho^C,\,\rho^T)$ corresponding to the combinations reported in Table~\ref{tab:combinations}. 
The plots on the left correspond to the flux of each single species, as a function of its corresponding number density (top) and of the total number of vehicles (bottom).
The plot on the right gives the total flux as a function of the total number of vehicles. This deterministic choice allows us to look at the transition from free to congested phase. Here, each combination has a different critical value of the density for the phase transition, which depends on the ratio of the different species within the mixture. But we know from Fig. \ref{fig:DiagFond_ing} that each of these critical values of the density will correspond to the single value  $s=\tfrac12$. Note that the plot on the right begins to resemble the experimental fundamental diagrams of Fig. \ref{fig:exp_diag}. 

\begin{figure}[!t]
\centering
\includegraphics[width=\textwidth]{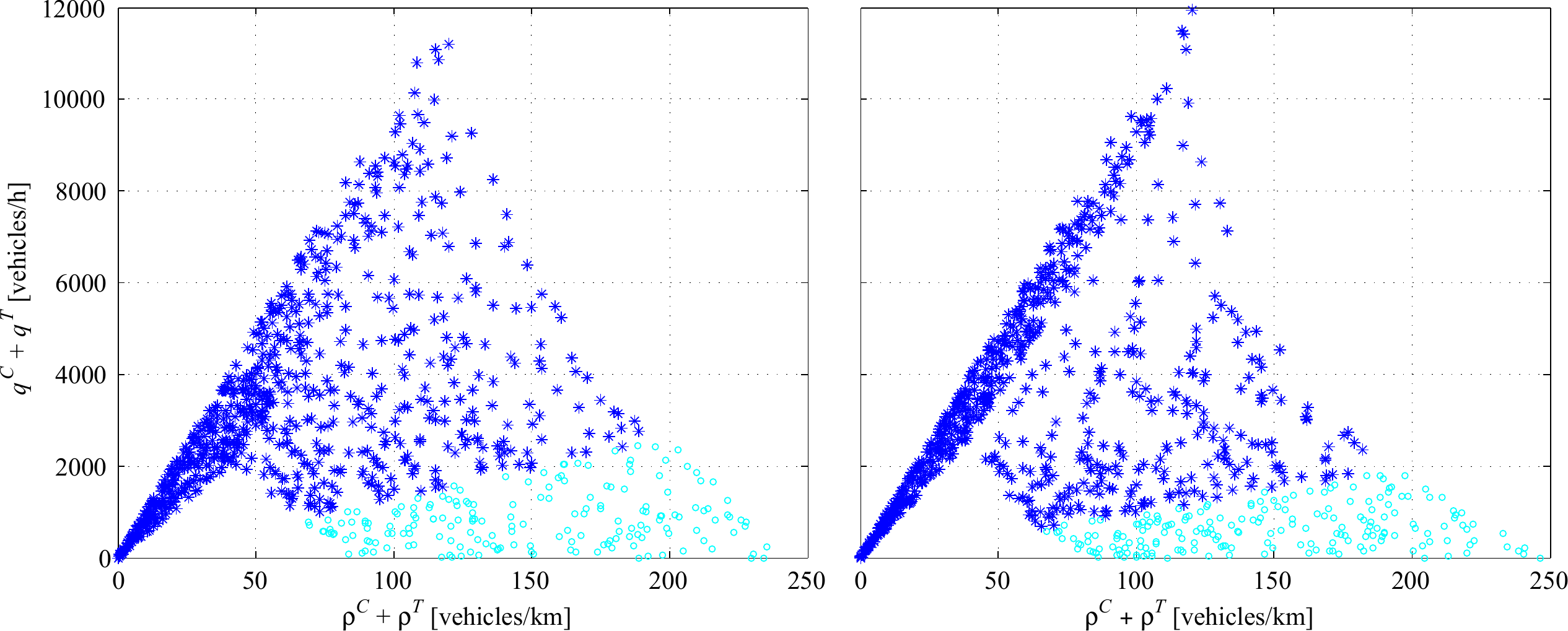}
\caption{Flux-density diagram of the complete mixture obtained with three random pairs $(\rho^C$,\,$\rho^T)$ for each $s\in [0,\,1]$. Left: $n^C=3$, $n^T=2$; right: $n^C=4$, $n^T=3$. Blue star markers: data for $s\leq 0.8$; cyan circle markers: data for $s>0.8$.}
\label{fig:25Feb}
\end{figure}

By sampling three random pairs $(\rho^C,\,\rho^T)$ for any given $s\in [0,\,1]$ we obtain the fundamental diagrams illustrated in Fig.~\ref{fig:25Feb}, which clearly capture the main characteristics of the experimental diagrams discussed in Section~\ref{sec:fund_diag}. In particular, at low densities the total flux grows nearly linearly with small dispersion, while at higher densities it decreases with larger dispersion due to the frequent interactions between fast and slow vehicles. In the graph, cyan circles indicate the total density-total flux pairs obtained for $s\in (0.8,\,1]$, whereas blue stars indicate those obtained for $s\in [0,\,0.8]$. As a matter of fact, the latter are the most likely to occur in practice, since even in traffic jams vehicles attain seldom a state of maximum density and complete stop (see e.g., Fig.~\ref{fig:exp_diag}, where a residual movement always appears). Note also that the plot on the right exhibits a capacity drop across the critical density. The behaviour of the model with respect to the number of discrete velocities is analyzed in~\cite{puppoUNPUBL}.

\begin{figure}[!t]
\centering
\includegraphics[width=\textwidth]{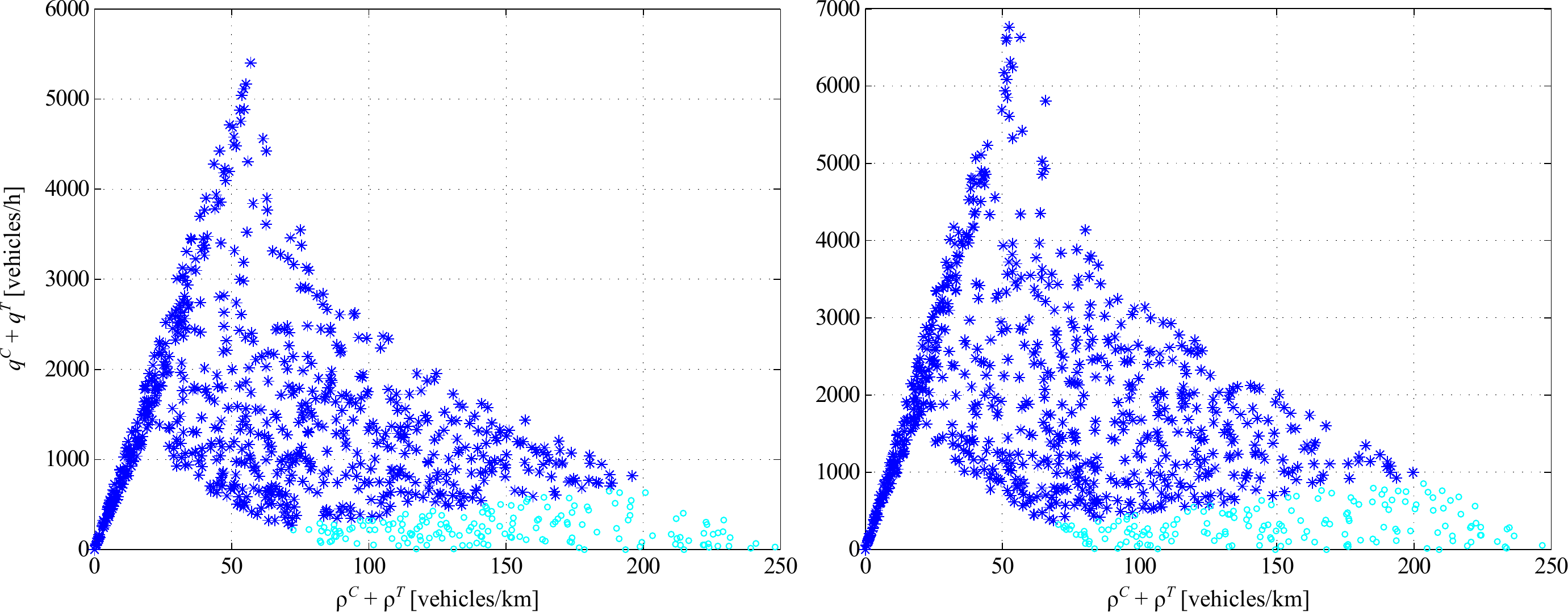}
\caption{Fundamental diagrams with $n^C=4$ and $n^T=3$ velocity classes and probability transition $P=1-\sqrt{s}$. The maximum speeds $v_{n^C}=100~\unit{km/h}$ (on the left) and $v_{n^C}=130~\unit{km/h}$ (on the right) are considered. The diagrams are obtained with three random pairs $(\rho^C$,\,$\rho^T)$ for each $s\in [0,\,1]$. Blue star markers: data for $s\leq 0.8$; cyan circle markers: data for $s>0.8$.}
\label{fig:square}
\end{figure}

In Figures~\ref{fig:ing_nornd}-\ref{fig:25Feb}  the transition probabilities were $P=1-s$ and $Q=0$, as given in~\eqref{eq:probab_s}. This choice determines flux-space diagrams in which the transition from free to congested phase occurs at the value $s=1/2$ for any composition of the mixture, see Figures~\ref{fig:ing_nornd}, \ref{fig:DiagFond_ing} and \S \ref{sec:equilibria}. This leads to fundamental diagrams in which the maximum value of the flow is reached when $\rho^T=0$ and $\rho=\rho^C_{\max}/2=125~\unit{vehicles/km}$, see Figures~\ref{fig:num_nornd} and~\ref{fig:25Feb}, because in this case the flow is composed only of cars travelling at their maximum speed.
 Choosing instead $P=1-s^{\gamma}$, with $\gamma<1$,  the phase transition occurs at $s_c=\left(\frac12\right)^{\frac{1}{\gamma}}$ and it decreases when $\gamma$ is decreased. In particular, in the left plot of Figure~\ref{fig:square} we consider $\gamma=1/2$ and we observe that the fundamental diagram, obtained with $n^C=4$ and $n^T=3$, shows a better reproduction of experimental data, see Section~\ref{sec:fund_diag}, Figure~\ref{fig:exp_diag}. In the right plot, instead, we consider $\gamma=1/2$ but $\vmax=130~\unit{km/h}$, showing that a greater maximum speed causes an increase in the slope of the diagram in the free flow phase.

\begin{figure}[!t]
\centering
\includegraphics[width=\textwidth]{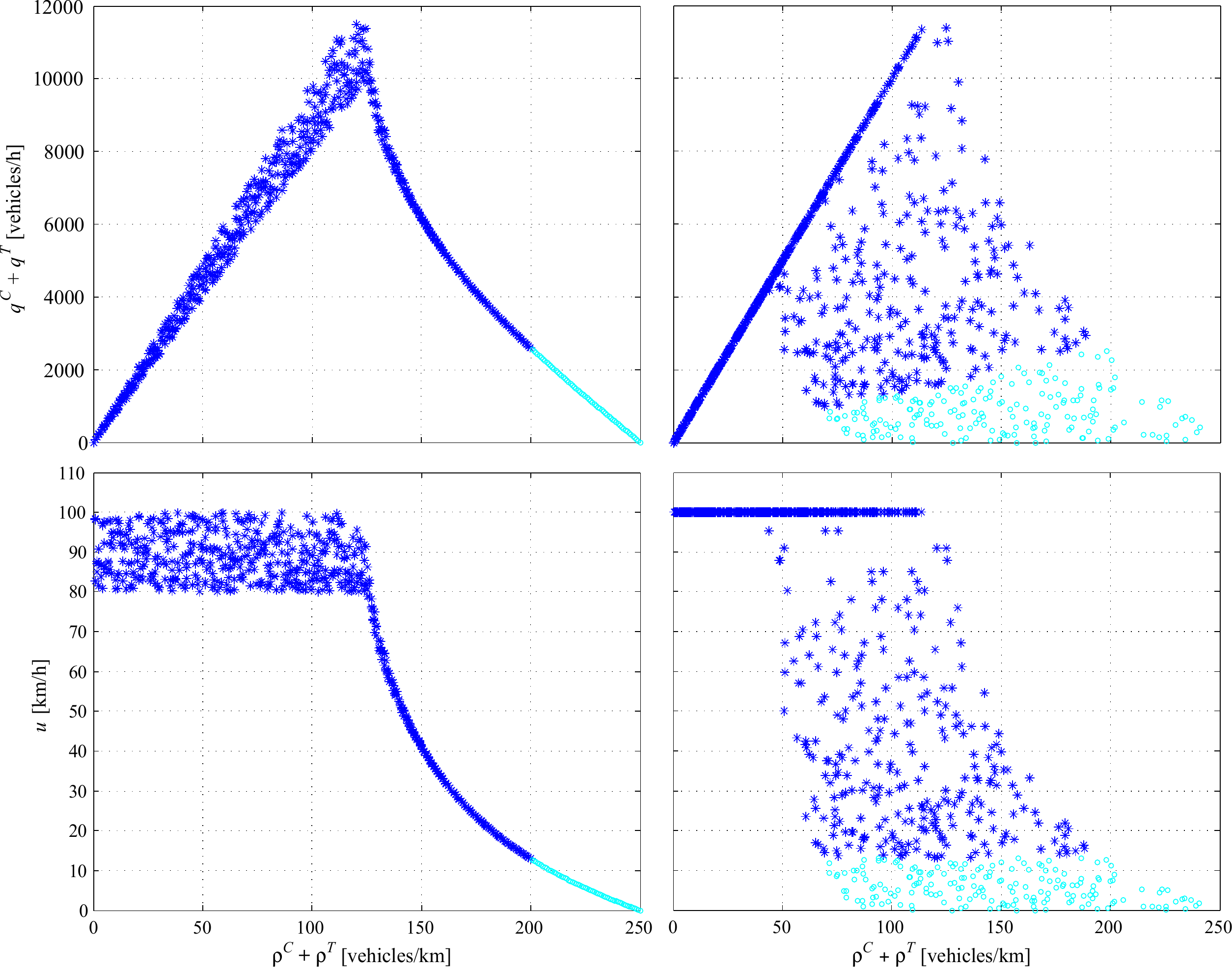}
\caption{Top row: flux-density diagrams, bottom row: speed-density diagrams vs. the total density for two populations of vehicles having either the same length and different microscopic speeds (left) or different lengths and same microscopic speeds (right). Blue star markers: data for $s\leq 0.8$; cyan circle markers: data for $s>0.8$.}
\label{fig:char_simili}
\end{figure}

The examples discussed so far suggest that the bulk characteristics of traffic at equilibrium could be predicted \emph{deterministically} once the composition of traffic, i.e., the pair $(\rho^C,\,\rho^T)$, is known. This induces to interpret the scattering of data in the congested phase as a consequence of 
the possible heterogeneity of vehicles in traffic, for a given level of road occupancy, rather than as an effect of the unpredictability of driver behaviours.

This affirmation can be articulated more precisely, by considering the 2-populations model  in which vehicles differ by only one characteristic. The plot on the left of Figure \ref{fig:char_simili} shows the flux-density diagram when the two classes of vehicles have the same length, but they differ  in their maximum speed: $\mathcal{V}^C=\left\{0,50,80,100\right\}$ and $\mathcal{V}^T=\left\{0,50,80\right\}$. We can interpret this case as thinking that vehicles are now identical, but we are considering two different types of driver, according to the maximum speed they are willing to settle on when the road is free (say, fast and slow drivers).
The plot on the right of Figure \ref{fig:char_simili} is obtained by considering vehicle classes which have different lengths, as given in Table~\ref{tab:parameters}, but the same microscopic  speeds. 

By inspecting them we infer that differences in the speeds (in particular, the maximum ones) of the vehicles composing the traffic mixture seem to be responsible for the small scattering of the data in the free flow phase, whereas differences in the length determine the larger scattering of the data in the congested flow phase.

\begin{figure}[!t]
\centering
\includegraphics[width=\textwidth]{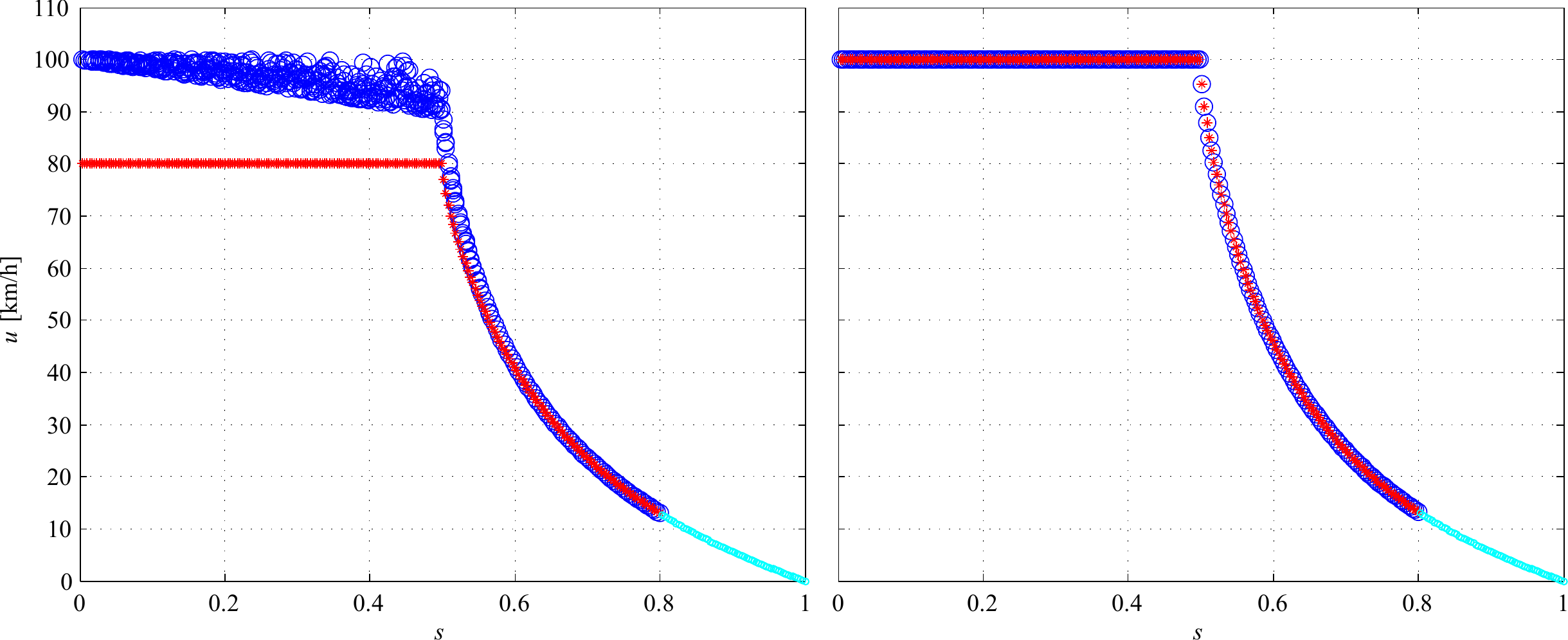}
\caption{Mean speed versus fraction of road occupancy $s$ for cars (blue) and trucks (red) having either the same length and different microscopic speeds (left) or different lengths and same microscopic speeds (right). Cyan circle markers refer to data for $s>0.8$.}
\label{fig:vel_char_simili}
\end{figure}

The same two cases are further investigated in Fig.~\ref{fig:vel_char_simili} by focusing on the speed diagram vs. the fraction of road occupancy $s$. In particular, when vehicles have different microscopic speeds but same length (diagram on the left) we deduce that, in free flow conditions, the slower population is not affected by the faster one, while fast drivers may have to slow down due to their interactions with slower cars. On the other hand, both types of vehicles are forced to slow down, reaching finally the same mean speed, as the road becomes congested. Conversely, when vehicles have the same microscopic speeds but different lengths (diagram on the right) we discover that the mean speed is the same for both populations in both traffic regimes, i.e., in other words, it is a one-to-one function of the fraction of occupied space.

These remarks are indeed consistent with daily experience of driving on highways: in free flow drivers can choose their speed, and thus they keep different maximum speeds according to their driving style, while in congested flow they tend to travel all at the same speed, which steadily decreases as the traffic congestion increases.

\begin{figure}[!t]
\centering
\includegraphics[scale=0.5]{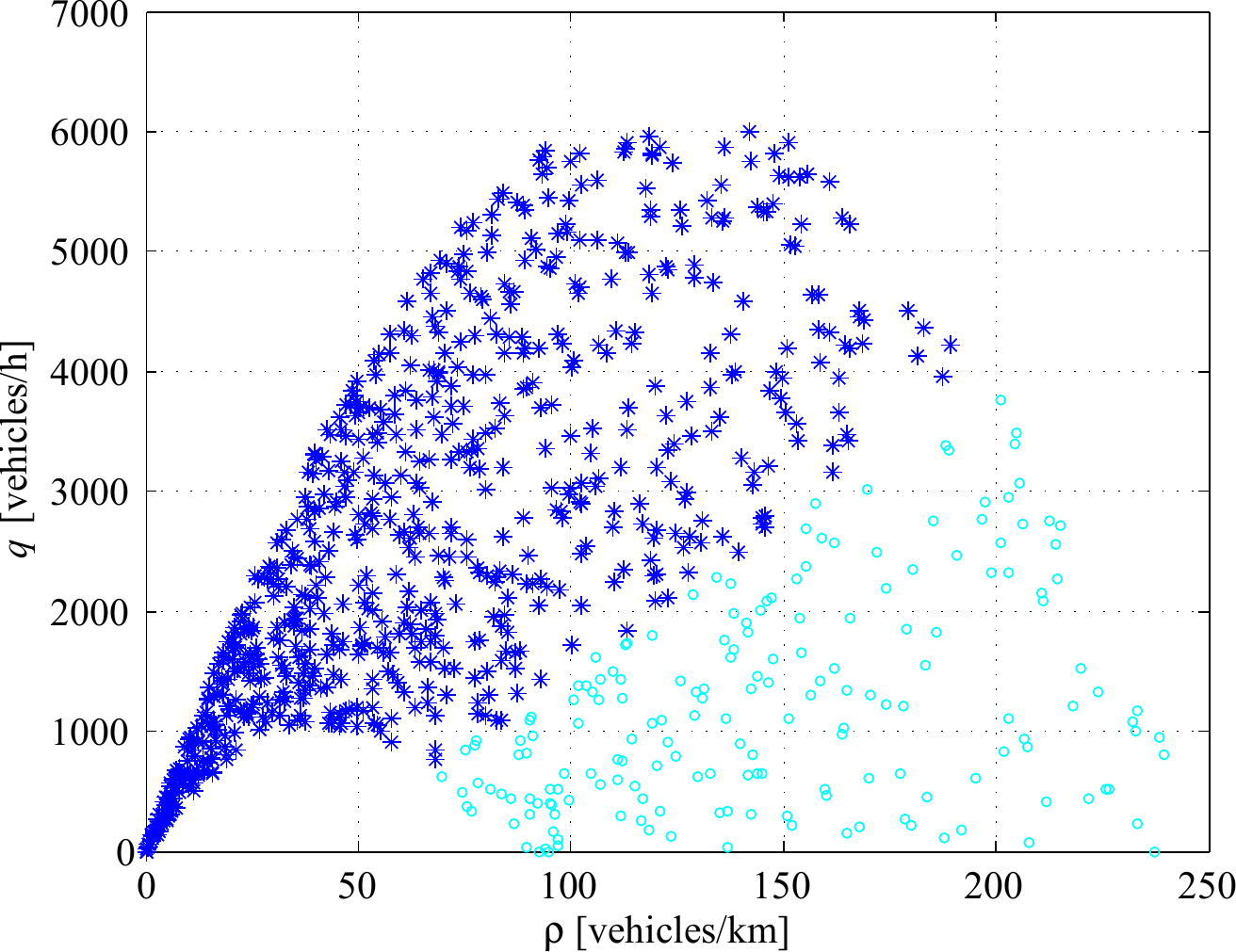}
\caption{Flux-density diagram of the two-population macroscopic model~\cite{benzoni2003EJAM}, see also~\eqref{mod:colombo2}, obtained by sampling three random pairs $(\rho_1,\,\rho_2)$ for every $s\in [0,\,1]$. Blue star markers: data for $s\leq 0.8$; cyan circle markers: data for $s>0.8$.}
\label{fig:colombo}
\end{figure}

Finally, 
Figure~\ref{fig:colombo} shows the fundamental diagram for the traffic mixture modelled by the two-population macroscopic model~\cite{benzoni2003EJAM} summarized in Section~\ref{sec:2pop}. It is immediate to notice that there is no trace of the sharp phase transition predicted by our kinetic model and that the scattering of the data is very high also at low densities.

\section{Conclusions and perspectives}
\label{sec:conclusions}
In this paper we have introduced a kinetic model for vehicular traffic with a new structure which accounts for the heterogeneous composition of traffic flow. Our approach differs from standard kinetic models in that  we consider two distribution functions describing two classes of vehicles with different physical features, in this case the typical length of a vehicle and its maximum speed.

As in~\cite{fermo2013SIAP}, the model is built by assuming a discrete space of microscopic speeds and by expressing vehicle interactions in terms of transition probabilities among the admissible speed classes. We have shown that our two-population model satisfies an indifferentiability principle, which makes it consistent with the original single-population model when the particles composing the mixture share the same physical characteristics (in our case the vehicle length and maximum speed). This property, enforced in~\cite{andries2001REPORT}, is not trivial, and several kinetic models for gas mixtures possess it only at equilibrium~\cite{groppi2004PF,hamel1965PF}.

We have then used our two-population kinetic model to perform a computational analysis of the equilibria of the system and to derive in this way the fundamental diagrams predicted by the simulated dynamics. Even with a small number of microscopic speeds, such diagrams feature a structure closely resembling experimental data. In particular, they are characterised by a marked phase transition: at low vehicle densities (free flow) the flux increases almost linearly with small standard deviation, while beyond a critical density the flow decreases taking widely scattered values (congested flow). We have also computed the critical density at which the transition occurs.

Several authors have dealt with this problem, cf. e.g.,~\cite{fermo2014DCDSS,gunther2004SIAP} where this phenomenon is explained by invoking the uncertainty of the drivers' behaviour in terms of standard deviation of the statistical distribution of speeds at equilibrium. However, such an approach predicts a zero standard deviation in the free phase of traffic and furthermore interprets the scattered distribution of the data in the congested phase as a consequence of the variability of the \emph{microscopic} speeds at equilibrium. In our case, instead, we do not only recover the sharp phase transition, which seems to result naturally from our kinetic approach, but we also obtain the scattered behaviour at a genuinely \emph{macroscopic} level as a consequence of the fact that a given road occupancy can be obtained with different compositions of the mixture.  In other words, if the flux is given as a function of the number of vehicles crossing a section of road in a unit time, then our model indicates that the scattering may be due to the simultaneous presence of different types of vehicles. On the other hand, in the congested phase the mean speed of the vehicles seems to depend only on the degree of congestion of the road.


Finally, we also wish to note that the model is very simple: the complexity of a real flow is clustered in the characteristics of only two distinct populations, with a very small number of microscopic velocities. Thus, from a computational point of view, this construction is not significantly more demanding than a macroscopic model.

As far as the analytical properties of the model are concerned, we refer to the forthcoming paper~\cite{puppoUNPUBL}, where we prove the well-posedness of the Cauchy problem associated with~\eqref{eq:model.2pop}, in the sense that the solution exists, is unique, depends continuously on the initial data, and moreover remains nonnegative and bounded by the initial mass. Furthermore, we can also prove that equilibria, which define the fundamental diagrams, are uniquely determined by the initial mass of the two classes of vehicles, and, in some simplified cases, they can be computed explicitly. Additional study will be dedicated to the extension of the present model to road networks and multilane highways.

\bigskip
\begin{acknow}
This work was partly supported by \textquotedblleft National Group for Scientific Computation\textquotedblright (GNCS-INDAM)
\end{acknow}

\bibliographystyle{plain}
\bibliography{PgSmTaVg-funddiag_traffic}

\end{document}